\renewcommand{\l}[1]{%
    \begingroup 
    \mathtoolsset{
    prescript-sup-format=\mathit,%
    }%
    \prescript{}{l}{#1}%
    \endgroup%
    }
\newcommand{\loccit}{\textit{loc.~cit.~}}
\newcommand{\cat}[1]{\mathrm{#1}}
\newcommand{\La}{\cat{La}}
\newcommand{\Sh}{\cat{Sh}}
\newcommand{\Set}{\cat{Set}}
\newcommand{\Q}{\mathbf{Q}}
\newcommand{\N}{\mathbf{N}}
\newcommand{\R}{\mathbf{R}}
\newcommand{\cA}{\mathcal{A}}
\newcommand{\cB}{\mathcal{B}}
\newcommand{\cG}{\mathcal{G}}
\newcommand{\cT}{\mathcal{T}}
\newcommand{\cF}{\mathcal{F}}
\newcommand{\cX}{\mathcal{X}}
\newcommand{\sx}{\mathsf{x}}
\renewcommand{\lg}{\mathfrak{g}}
\newcommand{\ad}{\mathrm{ad}}
\newcommand{\an}{\mathrm{an}}
\newcommand{\lc}{\mathrm{lc}}
\DeclareMathOperator{\Hom}{Hom}
\DeclareMathOperator{\iHom}{\mathcal{H}\mathit{om}}
\DeclareMathOperator{\Aut}{Aut}
\DeclareMathOperator{\id}{id}
\DeclareMathOperator{\pr}{pr}
\DeclareMathOperator*{\colim}{colim}
\newcommand{\del}{\partial}
\newcommand{\ul}[1]{\underline{#1}}
\renewcommand{\dot}{\bullet}
\newtheorem*{thm*}{Theorem}
\newtheorem{cor}[equation]{Corollary}
\newtheorem*{cor*}{Corollary}
\newtheorem{lemma}[equation]{Lemma}
\newtheorem{prop}[equation]{Proposition}
\theoremstyle{definition}
\newtheorem{dfn}[equation]{Definition}
\newtheorem*{dfn*}{Definition}
\theoremstyle{remark}
\newtheorem{rem}[equation]{Remark}
\newtheorem*{rem*}{Remark}
\newtheorem*{ex*}{Example}
\newtheorem{ex}[equation]{Example}
\title{On an analytic version of Lazard's isomorphism}
\author{Georg Tamme}
\address{Fakult\"at f\"ur Mathematik, Universit\"at Regensburg, 93040 Regensburg, Germany}
\email{georg.tamme@ur.de}
\thanks{The author is supported by the CRC 1085 \emph{Higher Invariants} (Universit\"at Regensburg)}
\date{\today}
\begin{document}

 \begin{abstract}
 We prove a comparison theorem between locally analytic group cohomology and Lie algebra cohomology for locally analytic representations 
of a Lie group over a nonarchimedean field of characteristic 0. The proof is similar to that of van-Est's isomorphism and uses only a minimum of functional analysis.
 \end{abstract}

\maketitle

\tableofcontents

\section*{Introduction}

In his seminal paper \cite{Lazard} Lazard established two basic theorems concerning the cohomology
of a compact $\Q_{p}$-analytic Lie group $G$ with Lie algebra $\lg$. Firstly, if $V$ is a finite dimensional
$\Q_{p}$-vector space with continuous $G$-action, the natural map from locally analytic group
cohomology $H^{*}_{\an}(G,V)$, defined in terms of locally analytic cochains, to continuous group
cohomology $H^{*}_{\mathrm{cont}}(G,V)$ is an isomorphism. Secondly, there is a natural isomorphism between the
direct limit $\colim_{G'\subset G} H^{*}_{\mathrm{cont}}(G', V)$, where $G'$ runs through the system of open
subgroups of $G$, and the Lie algebra cohomology $H^{*}(\lg,V)$. Hence, combining both, there is a
natural isomorphism
\begin{equation}\label{eq:lazard}
\colim_{G'\subset G \text{ open}} H^*_{\an}(G',V) \cong H^*(\lg,V).
\end{equation}

These results play an important role in arithmetic geometry, in particular in the theory of Galois
representations, or in the study of $p$-adic regulators \cite{Huber-Kings}. 

At least for certain
Lie groups, integral and $K$-analytic versions have been obtained by Huber, Kings, and Naumann
\cite{HKN}, when $K$ is a finite extension of $\Q_p$.
The proofs are based on Lazard's original argument via continuous group cohomology, and are not easily accessible.
A somewhat simplified proof has been given by Lechner \cite{Lechner}
using formal group cohomology.

On the other hand, the situation for a real Lie group $G$ is much more transparent.
The analogous result is van Est's isomorphism $H^{*}_{d}(G,V) \cong
H^{*}(\lg,K;V)$, which relates differentiable group cohomology with relative Lie algebra cohomology
for a maximal compact subgroup $K \subseteq G$. 
Its proof is based on the following observations: The quotient
$G/K$ is contractible, hence the de Rham complex $\Omega^{*}(G/K,V)$ with coefficients in a
$G$-representation $V$ is a resolution of $V$. Moreover, for any $k$, the space $\Omega^{k}(G/K,V)$
is $G$-acyclic. Hence, $H^{*}_{d}(G,V)$ is computed by the $G$-invariants of the complex
$\Omega^{*}(G/K,V)$, which is precisely the Chevalley-Eilenberg complex computing relative Lie
algebra cohomology $H^{*}(\lg,K;V)$.

It is a natural question whether a similar argument works in the nonarchimedean situation. 
In this note, we show that this is indeed the case. This gives a direct proof of the
isomorphism \eqref{eq:lazard}  and generalizes it with respect to the ground field and the coefficients:
\begin{thm*}
Let $K$ be a nonarchimedean field of characteristic $0$.
Let $G$ be a locally $K$-analytic Lie group and $G\to \Aut(V)$ a locally analytic representation on a barrelled locally convex $K$-vector space.
Denote by $\lg$ the $K$-Lie algebra of $G$. Then there are natural isomorphisms
\[
\colim_{G'\subset G \text{ open}} H^{*}_{\an}(G',V) \cong H^{*}(\lg, V),
\]
where the colimit is taken over the system of open subgroups of $G$.
\end{thm*}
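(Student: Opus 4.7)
The plan is to imitate van Est's proof directly on a sufficiently small open subgroup $G' \subseteq G$. Taking $G'$ to lie inside a coordinate chart, so that it is locally $K$-analytically isomorphic to a polydisc in $K^{n}$ with $n = \dim \lg$, consider the complex $\Omega^{*}_{\an}(G',V)$ of locally analytic $V$-valued differential forms on $G'$, endowed with the left regular $G'$-action. I want to show that it is a $G'$-equivariant resolution of $V$ by $G'$-acyclic modules whose complex of $G'$-invariants computes Lie algebra cohomology.

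The first ingredient is a $V$-valued Poincaré lemma: on the polydisc $G'$, the augmentation $V \to \Omega^{*}_{\an}(G',V)$ is exact. One expands analytic forms in local coordinates and builds a termwise primitive at the level of monomials via the usual explicit homotopy; barrelledness of $V$ is used to commute the $V$-coefficients with convergence of the resulting power series so that the formal primitive remains genuinely analytic. The second ingredient is that for each $q$ the $G'$-module $\Omega^{q}_{\an}(G',V)$ is acyclic for locally analytic group cohomology of $G'$. This is the familiar ``induced module'' argument: evaluation at the identity produces a contracting homotopy on the bar complex $C^{*}_{\an}\bigl(G',\Omega^{q}_{\an}(G',V)\bigr)$, and one has to verify that the resulting operators remain locally analytic in all arguments.

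Given both ingredients, assemble the bicomplex $C^{p}_{\an}\bigl(G',\Omega^{q}_{\an}(G',V)\bigr)$. The Poincaré lemma, applied column-wise, shows that its totalisation is quasi-isomorphic to $C^{*}_{\an}(G',V)$ and hence computes $H^{*}_{\an}(G',V)$; the row-acyclicity shows that it is equally quasi-isomorphic to the complex of $G'$-invariants $\bigl(\Omega^{*}_{\an}(G',V)\bigr)^{G'}$. Finally, a left-invariant locally analytic $k$-form on $G'$ is determined by its value at the identity, providing a natural isomorphism $\bigl(\Omega^{k}_{\an}(G',V)\bigr)^{G'} \cong \Hom_{K}(\Lambda^{k}\lg, V)$, under which the de Rham differential becomes the Chevalley--Eilenberg differential. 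Passing to the colimit over the cofinal system of such $G'$ then yields the theorem.

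The main obstacle I expect is the acyclicity of $\Omega^{q}_{\an}(G',V)$. Although the contracting homotopy is formally the standard one, showing that it produces cochains which are locally analytic with values in the infinite-dimensional space $\Omega^{q}_{\an}(G',V)$, equipped with its natural locally convex topology, requires actual functional-analytic work. This is where the barrelledness hypothesis on $V$ and the restriction to a small coordinate chart should earn their keep, by guaranteeing that termwise operations on convergent $V$-valued power series preserve local analyticity and interact correctly with the $G'$-action.
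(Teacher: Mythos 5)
There is a genuine gap, and it sits at the heart of the nonarchimedean situation: your first ingredient, the claim that on the polydisc $G'$ the augmentation $V \to \Omega^{*}_{\an}(G',V)$ is exact, is false. A ball in $K^{n}$ is totally disconnected, so a locally analytic function with vanishing differential is only \emph{locally constant}, not constant; the kernel of $d\colon \Omega^{0}_{\an}(G',V)\to\Omega^{1}_{\an}(G',V)$ is the (huge) space $C^{\lc}(G',V)$ of locally constant functions rather than $V$. Consequently your column-wise application of the Poincar\'e lemma identifies the totalisation of the bicomplex with $C^{*}_{\an}\bigl(G',C^{\lc}(G',V)\bigr)$, not with $C^{*}_{\an}(G',V)$, and the chain of quasi-isomorphisms you propose does not yield $H^{*}_{\an}(G',V)\cong H^{*}(\lg,V)$ for an individual $G'$. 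Indeed that statement is not true in general: for compact $G'$ one only gets the $G'$-invariants $H^{*}(\lg,V)^{G'}$ (Corollary \ref{cor:LieAlgcohomforcompactG}), so some loss is unavoidable before passing to the limit.

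The paper's proof is exactly your outline with this point repaired: the de Rham complex is shown to resolve the sheaf of \emph{locally constant} (admissible) functions (Proposition \ref{prop:DeRhamResolution}), each $\Omega^{k}$ is acyclic by the induced-module argument you describe (Lemma \ref{lem:Omega-induced}, Corollary \ref{cor:i*acy}), and the invariant forms give the Chevalley--Eilenberg complex. The colimit over open subgroups is then not a formality but the step that closes the remaining gap: since the open subgroups form a fundamental system of neighbourhoods of $1$, a cochain that is locally constant along $G'$ becomes, after restriction to a small enough $G''\subseteq G'$, an honest analytic cochain with values in $V$, so $\colim_{G'} C^{\lc}_{G'}(G'\times E_{\dot}G',V)\cong\colim_{G'}C^{\an}_{G'}(E_{\dot}G',V)$. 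You should also note that barrelledness is not what rescues the Poincar\'e lemma (the homotopy operator is constructed on Banach coefficients and the subtlety for general $V$ is admissibility of the primitive); it is needed only to differentiate the orbit maps and thereby obtain the $\lg$-action on $V$.
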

The rough argument is as follows: The de Rham complex $\Omega^*(G,V)$ is a resolution of the locally constant $V$-valued functions on $G$. As in  the real case, each $\Omega^k(G,V)$ is $G$-acyclic, hence the cohomology of the locally constant $V$-valued functions on $G$ is isomorphic to the Lie algebra cohomology $H^*(\lg, V)$ (see Sections \ref{sec:DifferentialFormsAndLiaAlgebraCohomology} and \ref{sec:proof} for  precise results). The Theorem then follows by taking the direct limit over the open subgroups of $G$.

The proof also shows that, for compact  $G$, one can recover the locally analytic group cohomology from the Lie algebra cohomology as the invariants under the natural  $G$-action: $H^{*}_{\an}(G,V) \cong H^{*}(\lg, V)^{G}$ (see Corollary \ref{cor:LieAlgcohomforcompactG}).

Moreover, we describe the comparison map between locally analytic group cohomology and Lie algebra cohomology explicitly on the level of complexes: It is given by differentiating locally analytic cocycles at $1$ (see Section \ref{sec:explicit}).
As pointed out by the referee, one can use the methods of \cite{Huber-Kings} to show that, on cohomology groups, this comparison map agrees with the one studied by Lazard in the case that $K$ is $\Q_{p}$ and $V$ is finite dimensional.

In order to apply usual arguments from homological algebra, we show, following Flach \cite{Flach}, that the locally analytic cochain cohomology groups can be interpreted as derived functors of the global section functor on a topos $BG$ (Sections \ref{sec:1} and \ref{sec:2}). The nice feature of this is that it gives a quick proof of the main results which requires only a minimum of functional analysis.

An alternative approach to the cohomology of locally analytic representations of Lie groups over finite extensions of $\Q_{p}$  is due to Kohlhaase \cite{Kohlhaase}. It is based on relative homological algebra. 
He obtains similar results under an additional assumption on the group, which, as he proves, is fulfilled in many cases. 
The cohomology groups he defines are finer than ours in the sense that they themselves carry a locally convex topology. In contrast to the groups we use, they do not always coincide with the cohomology groups defined in terms of locally analytic cochains. 

I would like to thank the referee for useful remarks, in particular concerning the comparison of our isomorphism with Lazard's original one.

\subsection*{Notations and conventions}

For the whole paper, we let $K$ be a nonarchimedean field of characteristic $0$, i.e.,  $K$ is equipped with a nontrivial nonarchimedean absolute value $|\,.\,|$ such that $K$ is complete for the topology defined by $|\,.\,|$. 
By a manifold we will always mean a
paracompact, finite dimensional locally $K$-analytic manifold.
Note that, by \cite[Cor.~18.8]{Schneider}, any locally $K$-analytic Lie group is paracompact.
For manifolds $X,Y$, we denote by $C^{\an}(X,Y)$ the set of locally $K$-analytic maps from $X$ to
$Y$. We will refer to them simply as  analytic maps.

\section{Locally analytic group cohomology}
\label{sec:1}

In this section, we describe the topos-theoretic approach to locally analytic group cohomology. We refer to \cite{Flach} for the case of continuous cohomology.

Denote by $\La$ the category of  manifolds.  
We let $\Sh(\La)$ be the category
of sheaves on $\La$ with respect to the
topology generated by open coverings.
For this topology, every representable presheaf is a sheaf, hence we have the Yoneda embedding
$y\colon \La \to \Sh(\La)$.

Let $G$ be a Lie group. Then $y(G)$ is a group object in $\Sh(\La)$.
The category of sheaves with a $y(G)$-action is a topos
 \cite[Exp.~IV, 2.4]{SGA41}, 
called the classifying topos of $y(G)$. It will be denoted by $BG$.\footnote{More precisely, we assume the existence of universes and only consider manifolds which are elements of a given universe $\mathcal{U}$. Then $\Sh(\La)$ and $BG$ are $\mathcal{V}$-topoi for a universe $\mathcal{V}$ with $\mathcal{U} \in \mathcal{V}$.}
We denote its global section functor by 
$\Gamma\colon BG \to \Set, \Gamma(\cF)= \Hom_{BG}(*,\cF) = \cF(*)^G$.
Similarly, if $X$ is an object of $BG$, we denote by 
$\Gamma(X,-)=\Hom_{BG}(X,-)$ the functor of sections over $X$.
As usual, we define cohomology groups via the derived functors of the global section functor:
\begin{dfn}\label{def:general-def-of-cohom}
Let $\cA$ be an abelian group object of $BG$. Then we define
\[
H^i(G,\cA) := (R^i\Gamma)(\cA).
\]
\end{dfn}
\begin{ex}\label{ex:examples-of-sheaves}
Let $V$ be a finite dimensional $K$-vector space with a linear $G$-action such that 
the map $G\times V \to V$ defining the action is analytic. 
This induces an action $y(G)\times y(V) \to y(V)$, and hence $y(V)$ can naturally be considered as an element of $BG$. We have $\Gamma(y(V)) = V^{G}$. In the next section, we will show that the higher cohomology groups $H^{i}(G,y(V))$ coincide with the cohomology groups defined in terms of locally analytic cochains with coefficients in $V$.

As another example, let $M$ be a continuous $G$-module, i.e., a topological abelian group equipped with a linear $G$-action such that $G \times M \to M$ is continuous.
Then we have the sheaf of continuous $M$-valued functions $C(-,M)$ on $\La$. It also carries a natural action by $y(G)$. It follows from Proposition \ref{prop:cochains-neu} below that the groups $H^{i}(G, C(-,M))$ can be identified with the continuous cochain cohomology groups of $M$.
\end{ex}

We want to describe the cohomology groups defined in Definition~\ref{def:general-def-of-cohom} in terms of a concrete complex. 
We begin with some general considerations.

Let $\cT$ be a topos, and let $\cG$ be a group object in $\cT$.
For objects $\cA, \cB$ of $B\cG$ the internal hom $\iHom_{B\cG}(\cA,\cB)$ is given as follows: The underlying object of $\cT$ is
$\iHom_{\cT}(\cA,\cB)$ and the action of $\cG$ is given by the formula
$(g\phi)(a)=g(\phi(g^{-1}a))$.

Denote by $i\colon *\to \cG$ the morphism from the trivial group in $\cT$ to $\cG$. It induces a
geometric
morphism of topoi (see \cite[Exp.~IV, 4.5]{SGA41})
\[
i\colon \cT \cong B* \to B\cG.
\]
The left adjoint $i^*$ simply forgets the $\cG$-action.
The right adjoint is given by $i_*(\cF) = \iHom_{B\cG}(\l\cG, \cF)$ where $\l\cG$ is $\cG$ with its
natural left action, viewed as an object ob $B\cG$, and $\cF$ is viewed as object of $B\cG$ with
trivial $\cG$-action. The functor $i^{*}$ also has a left adjoint $i_{!}$ given by $\cF \mapsto \l\cG\times \cF$ with $\cG$-action via the first factor.

For an object $\cA \in B\cG$, we denote by $\cA^{\natural}$ the object of $B\cG$ with the same
underlying object in $\cT$ and trivial $\cG$-action.
\begin{lemma}\label{lem:iHom}
For $\cA, \cB \in B\cG$ we have
\[
\iHom_{B\cG}(\l\cG\times \cA, \cB) \cong  
i_*\iHom_\cT(i^*\cA, i^*\cB).
\]
\end{lemma}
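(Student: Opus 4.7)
The plan is to prove the isomorphism by a Yoneda-style argument, reducing it to the three adjunctions $i_! \dashv i^* \dashv i_*$ and the cartesian closed structure of $B\cG$ and of $\cT$. The only nontrivial input is a \emph{trivialization} of the diagonal action on any product with $\l\cG$.

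First, for any $\cY \in B\cG$, I would construct an isomorphism in $B\cG$
\[
\tau_\cY \colon \l\cG \times \cY \xrightarrow{\sim} \l\cG \times \cY^\natural = i_! i^* \cY, \qquad (g, y) \mapsto (g, g^{-1}\cdot y),
\]
where the source carries the diagonal action and the target carries the $\cG$-action only on the first factor. Equivariance is the computation $g' \cdot (g, g^{-1}\cdot y) = (g'g, g^{-1}\cdot y) = (g'g, (g'g)^{-1} g'\cdot y)$, which is indeed $\tau_\cY(g'\cdot(g,y))$. The inverse sends $(g,y)$ to $(g, g\cdot y)$. (Of course, this is written in generalized-element language but is really built from the structure maps, so it makes sense internally.) Applying $\tau_{\cX \times \cA}$, after swapping factors, gives a natural isomorphism $\cX \times \l\cG \times \cA \cong i_!(i^* \cX \times i^* \cA)$ for any $\cX \in B\cG$.

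Now by Yoneda it suffices to compute, for every $\cX \in B\cG$,
\begin{align*}
\Hom_{B\cG}\bigl(\cX, \iHom_{B\cG}(\l\cG \times \cA, \cB)\bigr)
&= \Hom_{B\cG}(\cX \times \l\cG \times \cA, \cB) \\
&\cong \Hom_{B\cG}\bigl(i_!(i^* \cX \times i^* \cA), \cB\bigr)\\
&\cong \Hom_\cT(i^* \cX \times i^* \cA, i^* \cB)\\
&\cong \Hom_\cT\bigl(i^* \cX, \iHom_\cT(i^* \cA, i^* \cB)\bigr)\\
&\cong \Hom_{B\cG}\bigl(\cX, i_* \iHom_\cT(i^* \cA, i^* \cB)\bigr),
\end{align*}
using cartesian closure of $B\cG$, the trivialization just constructed, the adjunction $i_! \dashv i^*$, cartesian closure of $\cT$, and the adjunction $i^* \dashv i_*$, in that order. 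The main (and only) obstacle is bookkeeping: distinguishing the diagonal action on $\l\cG \times \cY$ from the first-factor action on $i_! i^* \cY$. Once $\tau_\cY$ is in hand, this distinction dissolves and the rest is a straightforward chain of adjunctions.
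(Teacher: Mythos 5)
Your proposal is correct and is essentially the paper's own argument: the same chain of adjunctions $i_!\dashv i^*\dashv i_*$ plus cartesian closure, run through Yoneda, with the same key input — the trivialization $\l\cG\times\cY\cong\l\cG\times\cY^\natural$ (the paper writes its inverse as $(\pr_1,\text{action})$). The only difference is that you read the chain of isomorphisms in the opposite direction and make the equivariance check of the trivialization explicit.
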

\begin{proof}
Let $\cX$ be an object of $B\cG$. Then we have natural isomorphisms
\begin{align*}
\Hom_{B\cG}(\cX, i_{*}\iHom_\cT(i^*\cA, i^*\cB)) &\cong \Hom_{\cT}(i^{*}\cX, \iHom_\cT(i^*\cA, i^*\cB)) \\
&\cong \Hom_{\cT}(i^{*}(\cX\times \cA), i^{*}\cB) \\
&\cong \Hom_{B\cG}(i_{!}i^{*}(\cX\times\cA), \cB) \\
&\cong \Hom_{B\cG}(\l\cG\times (\cX\times \cA)^{\natural}, \cB) \\
&\cong \Hom_{B\cG}(\l\cG\times \cX\times \cA, \cB) \\
&\cong \Hom_{B\cG}(\cX, \iHom_{B\cG}(\l\cG\times \cA, \cB))
\end{align*}
where we used the isomorphism $\l\cG\times (\cX\times \cA)^{\natural} \cong \l\cG\times \cX\times \cA$ given by $(\pr_1,
\text{action})$. This implies the lemma.
\end{proof}

We now consider the case $\cT=\Sh(\La), \cG=y(G)$. For a sheaf $\cF$ on $\La$,
the sheaf underlying $i_{*}\cF$ is, by the above,  given by 
$X\mapsto \iHom_{\Sh(\La)}(y(G),\cF)(X) \cong \cF(G\times X)$ (Yoneda lemma). 
\begin{rem}\label{rem:strictly-paracompact}
By our general assumption, every manifold $X$ in $\La$ is paracompact. 
By \cite[Prop.~8.7]{Schneider}, it is even \emph{strictly paracompact}, i.e., every open covering of $X$ can be refined by a covering by pairwise disjoint open subsets.
This implies in particular that the functor of sections over $X$ is exact on the category of abelian sheaves on $\La$.
\end{rem}

\begin{lemma}
The functor $i_*$ from abelian sheaves on $\La$ to abelian group objects in $BG$ is exact.
\end{lemma}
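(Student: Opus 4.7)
The plan is to exploit the explicit description $(i_{*}\cF)(X) \cong \cF(G \times X)$ derived just above the lemma, together with the exactness of the section functor provided by Remark~\ref{rem:strictly-paracompact}.

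As a right adjoint to $i^{*}$, the functor $i_{*}$ is automatically left exact, so only the preservation of epimorphisms needs argument. Moreover, a sequence of abelian group objects in $BG$ is exact if and only if the underlying sequence of abelian sheaves on $\La$ is exact, since the forgetful functor $i^{*}$ preserves and reflects kernels and cokernels (being part of a geometric morphism, it is exact, and an equivariant morphism is trivial as soon as the underlying sheaf morphism is). Hence it suffices to show that for every epimorphism $\cF \to \cF''$ of abelian sheaves on $\La$, the induced morphism $i_{*}\cF \to i_{*}\cF''$ is an epimorphism of abelian sheaves.

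For this, I would observe that for every $X \in \La$ the product $G \times X$ is again a finite dimensional locally $K$-analytic manifold which is paracompact: since both factors are strictly paracompact, any open cover of $G \times X$ can be refined by products of pairwise disjoint refinements of the factor covers, and such a refinement is itself pairwise disjoint. Hence $G \times X$ lies in $\La$, and Remark~\ref{rem:strictly-paracompact} applied to $G \times X$ says that $\cG \mapsto \cG(G \times X)$ is exact on abelian sheaves on $\La$. Applied to $\cF \to \cF''$, this yields a surjection $\cF(G \times X) \to \cF''(G \times X)$, i.e. $(i_{*}\cF)(X) \to (i_{*}\cF'')(X)$ is surjective for every $X$. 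Therefore $i_{*}\cF \to i_{*}\cF''$ is an epimorphism of abelian sheaves, and combined with left exactness this gives the required exactness of $i_{*}$.

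The only delicate step is the verification that $G \times X$ stays inside $\La$; it is precisely strict paracompactness, rather than mere paracompactness, that makes this stable under products and thereby makes the whole reduction to sections go through.
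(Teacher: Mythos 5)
Your proof is correct and follows essentially the same route as the paper's: left exactness from the adjunction, and preservation of epimorphisms via the identification $(i_{*}\cF)(X)\cong\cF(G\times X)$ together with the exactness of sections over the strictly paracompact manifold $G\times X$ from Remark~\ref{rem:strictly-paracompact}. The only additions are explicit justifications the paper leaves implicit (that $i^{*}$ reflects epimorphisms and that $G\times X$ remains in $\La$), and these are fine.
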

\begin{proof}
Since $i_*$ is a right adjoint, it is left exact. Consider an epimorphism $\cA \to \mathcal B$ of
abelian sheaves on $\La$. 
By the above remark,
the functor of
sections over $G\times X$ is exact, and hence $\cA(G\times X) \to \mathcal B(G\times X)$ is an epimorphism
of abelian groups. From this we deduce that  $i_*\cA \to i_*\mathcal{B}$ is an epimorphism.
\end{proof}

\begin{cor}\label{cor:i*acy}
For any abelian sheaf $\cA$ on $\La$, we have
\[
H^i(G, i_*\cA) \cong 
\begin{cases}
\cA(*) &\text{if } i=0,\\
0 & \text{else.}
\end{cases}
\]
\end{cor}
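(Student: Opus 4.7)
The plan is to resolve $\cA$ by injectives in $\Sh(\La)$ and transport the resolution to $BG$ via $i_*$, exploiting the exactness property just proved.

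First I would pick an injective resolution $\cA \to I^{\bullet}$ in the category of abelian sheaves on $\La$. The crucial observation is that $i^*$ has both a left adjoint $i_!$ and a right adjoint $i_*$ (both recalled just before Lemma~\ref{lem:iHom}), so $i^*$ preserves all limits and colimits; in particular it is exact. By the standard adjunction argument, its right adjoint $i_*$ therefore sends injectives to injectives. Combined with the preceding lemma, which states that $i_*$ is exact, this shows that $i_*\cA \to i_*I^{\bullet}$ is an injective resolution of $i_*\cA$ in $BG$. Consequently $H^i(G, i_*\cA)$ is computed as the cohomology of the complex $\Gamma(i_*I^{\bullet})$.

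Next I would identify the global sections. By adjunction,
\[
\Gamma(i_*\cF) = \Hom_{BG}(*, i_*\cF) \cong \Hom_{\Sh(\La)}(i^{*}*, \cF) = \Hom_{\Sh(\La)}(*, \cF) = \cF(*)
\]
for any abelian sheaf $\cF$ on $\La$. Hence the complex to be analyzed is just $I^{\bullet}(*)$, the complex of sections at the terminal manifold.

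Finally I would invoke the exactness of the sections functor at the point: by Remark~\ref{rem:strictly-paracompact}, every manifold in $\La$ is strictly paracompact, so in particular taking sections over the one-point manifold is exact on abelian sheaves on $\La$. Applying this exact functor to the exact sequence $0 \to \cA \to I^{0} \to I^{1} \to \cdots$ yields an exact sequence $0 \to \cA(*) \to I^{0}(*) \to I^{1}(*) \to \cdots$, so $H^{0}(I^{\bullet}(*)) = \cA(*)$ and all higher cohomology vanishes, which is exactly the claim.

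I expect the main subtlety to be the compatibility of adjoints and the fact that $i_*$ preserves injectives — this rests on the exactness of $i^*$, which is transparent here only because $i^*$ has a further left adjoint $i_!$. The use of strict paracompactness to kill higher cohomology at the point is the other conceptual input, but both pieces are already supplied by the excerpt, so there should be no real obstacle.
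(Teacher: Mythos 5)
Your argument is correct and is essentially the paper's own proof: both rest on $i_*$ preserving injectives (via exactness of $i^*$), the exactness of $i_*$ from the preceding lemma, the identification $\Gamma\circ i_*(-)\cong\Hom_{\Sh(\La)}(*,-)$, and the exactness of sections over a point guaranteed by strict paracompactness. You merely unwind the resulting isomorphism of derived functors by writing out an explicit injective resolution, which the paper leaves implicit.
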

\begin{proof}
Since the left adjoint $i^*$ is exact, $i_*$ sends injectives to injectives. Since $i_*$ is exact
and $\Gamma \circ i_*(-) = \Hom_{BG}(*, i_*(-)) \cong \Hom_{\Sh(\La)}(*, -)$ we see that
\[
H^i(G, i_*\cA) \cong 
H^i_{\Sh(\La)}(*, \cA) \cong
\begin{cases}
\cA(*) &\text{if } i=0,\\
0 & \text{else.} 
\end{cases} \qedhere
\]
\end{proof}

We let $E_{\dot}G$ be the simplicial manifold given in degree $p$ by $E_{p}G:=G^{p+1}$, and
$\phi^{*}\colon E_{q}G \to E_{p}G$, for $\phi\colon \{0<\dots<p\} \to \{0<\dots<q\}$, given by
$(g_{0}, \dots, g_{q}) \mapsto (g_{\phi(0)}, \dots, g_{\phi(p)}).$ The group $G$ acts on $E_{\dot}G$
via diagonal left multiplication. We denote the simplicial object of $BG$ given by $y(E_{\dot}G)$ equipped with  diagonal $y(G)$-action by $\l E_{\dot}G$.
%
For an abelian group object $\cA$ in $BG$, the degree-wise sections over $\l{E_{\dot}G}$ form a cosimplicial abelian group $\Gamma(\l{E_{\dot}G}, \cA)$.
\begin{prop}\label{prop:cochains-neu} 
Let $\cA$ be an abelian group object of $BG$. Then 
\[
H^{*}(G,\cA) \cong H^{*}(\Gamma(\l{E_{\dot}G}, \cA)) .
\]
\end{prop}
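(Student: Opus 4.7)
The plan is to construct an explicit $\Gamma$-acyclic resolution $\cA \to \cA^{\bullet}$ in $BG$ whose global sections realise the cosimplicial abelian group $\Gamma(\l E_{\bullet}G, \cA)$; the proposition will then follow immediately from the standard computation of derived functors via acyclic resolutions.

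Concretely, I would set $\cA^p := \iHom_{BG}(\l E_p G, \cA)$, equipped with the cosimplicial structure induced contravariantly by the simplicial structure on $\l E_{\bullet}G$ and augmentation $\cA \cong \iHom_{BG}(*, \cA) \to \cA^0$ coming from $\l y(G) \to *$. The coordinate change $(g_0,\dots,g_p) \mapsto (g_0, g_0^{-1}g_1, \dots, g_0^{-1}g_p)$ transforms the diagonal action on $\l E_p G$ into the action on the first factor only, identifying $\l E_p G \cong \l y(G) \times y(G^p)^{\natural}$. Lemma~\ref{lem:iHom} then yields
\[
\cA^p \cong i_* \iHom_{\Sh(\La)}(y(G^p), i^*\cA),
\]
which lies in the essential image of $i_*$ and so is $\Gamma$-acyclic by Corollary~\ref{cor:i*acy}. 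Moreover,
\[
\Gamma(\cA^p) = \Hom_{\Sh(\La)}(y(G^p), i^*\cA) = (i^*\cA)(G^p) = \Hom_{BG}(\l E_p G, \cA) = \Gamma(\l E_p G, \cA),
\]
and one verifies that this identification is compatible with the cosimplicial structures on both sides.

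What remains is to show that $\cA \to \cA^{\bullet}$ is a resolution, i.e., that the associated augmented cochain complex with alternating-sum differentials is exact. Since $i^*$ is exact and conservative (a morphism in $BG$ is an isomorphism if and only if the underlying morphism of sheaves is), it suffices to check exactness after applying $i^*$. A direct computation shows that $i^*\cA^p$ sends a manifold $X$ to $(i^*\cA)(X\times E_p G) \cong \iHom_{\Sh(\La)}(y(E_p G), i^*\cA)(X)$, so $i^*\cA \to i^*\cA^{\bullet}$ is nothing but $\iHom_{\Sh(\La)}(-, i^*\cA)$ applied to the augmented simplicial sheaf $y(E_{\bullet}G) \to *$. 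This augmented simplicial sheaf admits an extra degeneracy $s_{-1}\colon (g_0,\dots,g_p) \mapsto (e, g_0,\dots,g_p)$, yielding a simplicial contraction in $\Sh(\La)$; applying the additive functor $\iHom(-, i^*\cA)$ turns it into a cosimplicial contraction, so the associated augmented cochain complex is chain-contractible, hence in particular exact. The main subtlety is this last reduction from exactness in $BG$ to exactness in $\Sh(\La)$; once it is secured, the unit-insertion contraction is entirely classical, and combining everything gives $H^*(G,\cA) = H^*(\Gamma(\cA^{\bullet})) = H^*(\Gamma(\l E_{\bullet}G, \cA))$.
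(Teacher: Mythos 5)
Your proof is correct and follows essentially the same route as the paper: the same resolution $\iHom_{BG}(\l{E_{\dot}G},\cA)$, the same acyclicity argument via Lemma \ref{lem:iHom} and Corollary \ref{cor:i*acy}, and the same identification of its global sections with $\Gamma(\l{E_{\dot}G},\cA)$. The only difference is that you spell out why the augmented complex is exact (conservativity and exactness of $i^{*}$ plus the unit-insertion extra degeneracy, which is indeed not $G$-equivariant and so only exists after forgetting the action), a step the paper subsumes under the standard fact that the \v{C}ech nerve of an epimorphism yields a quasi-isomorphism.
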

\begin{proof}
The projection $\l y(G) \to *$ is an epimorphism in $BG$. The \v{C}ech nerve of this morphism is precisely $\l{E_{\dot}G}$. We thus have a quasi-isomorphism
\[
\cA \cong \iHom_{BG}(*,\cA) \xrightarrow{\simeq} \iHom_{BG}(\l{E_{\dot}G}, \cA).
\]
Using Lemma \ref{lem:iHom} and Corollary \ref{cor:i*acy} we see that the complex on the right hand side consists of $\Gamma$-acyclic objects. 
We conclude using $\Gamma(\iHom_{BG}(\l{E_{\dot}G}, \cA)) = \Hom_{BG}(*, \iHom_{BG}(\l{E_{\dot}G}, \cA)) \cong \Hom_{BG}(\l{E_{\dot}G}, \cA) = \Gamma(\l{E_{\dot}G}, \cA)$.
\end{proof}

\section{Locally analytic representations}
\label{sec:2}

In Example \ref{ex:examples-of-sheaves}, we saw how to associate an abelian group object of $BG$ to any finite dimensional locally analytic representation of $G$.
The goal of this section is to extend this to arbitrary locally analytic representations 
in the sense of Schneider and Teitelbaum \cite{ST},
and to relate the cohomology groups thus defined to 
the cohomology groups defined in terms of analytic cochains (Proposition \ref{prop:lacochains}).

We begin by recalling some basic notions about analytic functions and locally analytic representations. References are \cite{Feaux} or \cite[\S\S 2,3]{ST}.

If $W$ is a Banach space, a function $f\colon X \to W$ from a manifold $X$ to $W$ is called analytic, if, in  local charts, it is given by convergent power series with coefficients in $W$. 
The reader who is only interested in representations on Banach spaces can skip the following general definition and also all ``admissibility'' considerations later on.
Let $V$ be a locally convex separated $K$-vector space. 
A \emph{BH-space} \footnote{\emph{B}anach-\emph{H}ausdorff} for $V$ is a continuous inclusion of a separated Banach space $W \hookrightarrow V$ \cite[\S 1.2]{Feaux}.
Let $X$ be a manifold. A function $f\colon
X \to V$ is called  \emph{analytic}, 
if every $x\in X$ admits a neighborhood $U$, and a BH-space $W\hookrightarrow V$ such that $f|_{U}$ factors through an analytic map $U \to W$.
 We denote the set of all analytic functions $X\to V$ by $C^{\an}(X,V)$.  By \cite[Kor.~2.4.4]{Feaux}, $C^{\an}(X,V)$  is  a module over the algebra of analytic functions $C^{\an}(X,K)$ .
 For varying $X$, this is a sheaf on $\La$ denoted by $C^{\an}(-,V)$.

A \emph{topological representation} of the Lie group $G$ on  $V$ is an action of $G$ on $V$ by continuous automorphisms, i.e., a homomorphism $G \to \Aut(V)$ to the abstract group of continuous automorphisms $\Aut(V)$ of $V$. 
The topological representation is called
\emph{locally analytic} if all orbit maps $G\to V, g\mapsto gv$, are analytic (this is called a weakly analytic representation in  \cite[Def.~3.1.5]{Feaux}). 

\begin{ex}\label{ex:Banach-space-reps}
Let $W$ be a Banach space. Then $\Aut(W)$ is an open subset of the Banach space of continuous endomorphisms $\operatorname{End}(W)$.
F\'eaux de Lacroix shows in \cite[Kor.~3.1.9]{Feaux} that a topological representation of $G$ on $W$ is locally analytic if and only if the corresponding homomorphism $r\colon G \to \Aut(W) \subset \operatorname{End}(W)$ is analytic. Assume that this is the case. Let $X$ be a manifold, and let $\rho\colon X \to G$ and $f\colon X \to W$ be analytic maps. Then the point-wise product $\rho f\colon X \to W, x\mapsto \rho(x)f(x),$ is again analytic. Indeed, $\rho f$  equals the composition
\[
X \xrightarrow{(\rho,\id_{X})} G\times X \xrightarrow{r\times f} \Aut(W) \times W \subseteq \operatorname{End}(W) \times W \xrightarrow{\mathrm{ev}} W,
\]
where the first two maps are analytic by assumption, and the evaluation map $\mathrm{ev}$ is continuous and bilinear. 
It follows, that we get an action of $y(G)$ on the sheaf $C^{\an}(-,W)$, and $C^{\an}(-,W)$ can naturally be viewed as an object of $BG$. This generalizes Example \ref{ex:examples-of-sheaves}.
\end{ex}
For a general locally analytic representation of $G$ on $V$, this need no longer be true. Instead, we have to consider the subsheaf of $C^{\an}(-,V)$ of admissible functions as we  explain in the next paragraph.  The example above shows that for representations on Banach spaces, every analytic function is admissible. 

Let $G\to \Aut(V)$ be a topological representation.
We call an analytic function $f\colon X \to V$ \emph{admissible}, if the map $\hat f\colon G\times X \to V, (g,x)\mapsto gf(x)$, is analytic. 
Note that $\hat f$ is analytic iff its restriction $\hat f|_{U\times X}$ for some open subset $U\subset G$ is analytic. Indeed, if this is the case, then for any $h\in G$ the restriction $\hat f|_{hU\times X}$ is equal to the composition 
\[
(hU \times X) \xrightarrow{(g,x)\mapsto (h^{-1}g,x)} U \times X \xrightarrow{\hat f|_{U\times X}} V \xrightarrow{h\cdot} V,
\]
where the first two maps are analytic, and the last one is continuous and linear.
We define 
\[
\ul V(X) := C^{\ad}(X,V) := \{f \in C^{\an}(X,V)\,|\,  f \text{ is admissible}\}.
\]
This is a $C^{\an}(X,K)$-submodule of $C^{\an}(X,V)$.
We claim that  $\ul V$ is a subsheaf of $C^{\an}(-,V)$ and that the point-wise multiplication by $G$-valued analytic maps defines an action of $y(G)$ on $\ul V$.
We will henceforth view $\ul V$ as an abelian group object of $BG$.
\begin{proof}[Proof of the claim]
If $f\in \ul V(X)$ and $\phi\colon Y \to X$ is an analytic map between manifolds, then $f\circ\phi$ is analytic. Moreover, $\widehat{f\circ\phi} = \hat f \circ (\id_{G}\times \phi)$ is analytic as well, hence $f\circ \phi$ is admissible. Thus $\ul V$ is a presheaf.
Since admissibility is a local property, $\ul V$ is a sheaf.

Now let $\rho\colon X \to G$ be an analytic map. We define $\rho f$ by $(\rho f)(x) := \rho(x)f(x)$. We have to show that $\rho f$ is analytic and admissible. But this is clear since $\rho f$ equals the composition $X \xrightarrow{(\rho,\id_{X})} G \times X \xrightarrow{\hat f} V$ and $\widehat{\rho f}$ equals the composition $G\times X \xrightarrow{(g,x) \mapsto (g\rho(x), x)} G \times X \xrightarrow{\hat f} V$.
\end{proof}

\begin{ex}
A topological representation of $G$ on $V$ is locally analytic if and only if $\ul V(*) = V$.
\end{ex}

\begin{dfn}
For a locally analytic representation of $G$ on $V$ and $i\geq 0$ we define the locally analytic group cohomology of $G$ with coefficients in $V$ as
\[
H^{i}_{\an}(G,V) := H^{i}(G, \ul V).
\]
\end{dfn}

Recall that a homogeneous analytic $p$-cochain of $G$ with coefficients in $V$ is an analytic function $f\colon E_pG\to V$ which is $G$-equivariant, i.e., satisfies $f(gg_0, \dots, gg_p) = gf(g_0, \dots g_p)$. We denote the complex of homogeneous analytic cochains by $C^{\an}_{G}(E_{\dot}G, V)$. Its differential is induced by the simplicial structure of $E_{\dot}G$.

\begin{prop}\label{prop:lacochains}
The cohomology $H^*_{\an}(G, V)$ is isomorphic to the cohomology of the complex of homogeneous analytic cochains
$C^{\an}_{G}(E_\dot G, V)$.
\end{prop}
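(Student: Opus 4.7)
The plan is to combine Proposition \ref{prop:cochains-neu} with a concrete identification of the cosimplicial abelian group $\Gamma(\l{E_\dot G},\ul V)$ with the complex of homogeneous analytic cochains $C^{\an}_G(E_\dot G, V)$. By Proposition \ref{prop:cochains-neu} we already know
\[
H^*_{\an}(G,V) = H^*(G,\ul V) \cong H^*\bigl(\Gamma(\l{E_\dot G}, \ul V)\bigr),
\]
so everything reduces to the identification in each simplicial degree $p\ge 0$, together with a verification that the cosimplicial structure is the expected one.

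First I would unwind the definitions. By the Yoneda lemma, $\Hom_{\Sh(\La)}(y(E_p G),\ul V) \cong \ul V(E_p G) = C^{\ad}(E_p G, V)$. Passing to sections in $BG$ amounts to taking the $y(G)$-equivariant subset. Since an element $\rho \in y(G)(X) = C^{\an}(X,G)$ acts on $y(E_p G)(X)$ by pointwise diagonal left multiplication and on $\ul V(X)$ by pointwise multiplication by $\rho$, chasing the Yoneda identification (testing on the universal case $X = E_p G$ with $\sigma = \id$ and varying $\rho$) shows that $y(G)$-equivariance is precisely the pointwise $G$-equivariance condition $f(g g_0,\dots, g g_p) = g f(g_0,\dots, g_p)$. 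Hence
\[
\Gamma(\l{E_p G}, \ul V) = \bigl\{f \in C^{\ad}(E_p G, V) \,\big|\, f \text{ is } G\text{-equivariant}\bigr\}.
\]

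The main point, and to my mind the only non-trivial step, is to verify that the admissibility condition is automatic for equivariant analytic cochains: I would claim that every $G$-equivariant analytic $f\colon E_p G \to V$ lies in $C^{\ad}(E_p G, V)$. Indeed, the diagonal multiplication
\[
\mu\colon G \times E_p G \to E_p G,\qquad (g,(g_0,\dots,g_p)) \mapsto (g g_0,\dots, g g_p),
\]
is analytic since the group multiplication of $G$ is. By $G$-equivariance we have
\[
\hat f(g, (g_0,\dots,g_p)) = g\,f(g_0,\dots,g_p) = f(g g_0,\dots, g g_p) = (f\circ \mu)(g,(g_0,\dots,g_p)),
\]
so $\hat f = f\circ\mu$ is analytic, i.e.\ $f$ is admissible. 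Consequently the $G$-equivariant elements of $C^{\ad}(E_p G, V)$ are the same as the $G$-equivariant elements of $C^{\an}(E_p G, V)$, which by definition is $C^{\an}_G(E_p G, V)$.

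Finally, I would check compatibility of the cosimplicial structures: the simplicial face and degeneracy maps of $E_\dot G$ are analytic and $G$-equivariant, so precomposition induces cosimplicial structure maps $\Gamma(\l{E_{p-1}G},\ul V) \to \Gamma(\l{E_p G},\ul V)$ that agree with those of the homogeneous cochain complex. Taking the associated alternating-sum complex and applying Proposition \ref{prop:cochains-neu} finishes the proof. The only subtle step is the automatic admissibility, which allows one to drop the seemingly restrictive condition present in the definition of $\ul V$.
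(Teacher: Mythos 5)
Your proposal is correct and follows essentially the same route as the paper: identify $\Gamma(\l{E_pG},\ul V)$ with $G$-equivariant admissible functions via the Yoneda lemma, then observe that admissibility is automatic for equivariant analytic cochains because $\hat f = f\circ\mu$ with $\mu$ the (analytic) diagonal multiplication. The paper's proof is exactly this argument, so there is nothing to add.
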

\begin{proof}
By Proposition \ref{prop:cochains-neu} we have $H^*(G,\ul V) \cong H^*(\Gamma(\l{E_{\dot}G},\ul V))$. 
Using the Yoneda lemma we see that a section in $\Gamma(\l{E_pG}, \ul V)=\Hom_{BG}(\l{E_pG}, \ul V)$ is just an admissible function $f\colon E_pG \to V$ such that 
\[
\xymatrix{
G\times E_pG \ar[r]^-{\id\times f} \ar[d]_{\text{diagonal multiplication}} & G\times V \ar[d]^{\text{action}} \\
E_pG \ar[r]^f & V
}
\]
commutes, i.e., a $G$-equivariant admissible function $E_pG\to V$. 

To prove the claim, it suffices to show that, vice versa, every $G$-equivariant analytic function $f\colon E_pG \to V$ is admissible.
But, by the $G$-equivariance, $\hat f$ is equal to the composition $G \times E_{p}G \xrightarrow{(g,(g_0,\dots, g_p))\mapsto (gg_0, \dots, gg_p)} E_{p}G \xrightarrow{f} V$ and thus analytic. Thus $f$ is admissible. 
\end{proof}

\section{Differential forms and Lie algebra cohomology}
\label{sec:DifferentialFormsAndLiaAlgebraCohomology}

In this section, we introduce sheaves of differential forms with coefficients in a locally analytic representation $V$ of $G$ as objects of $BG$. Again, unless $V$ is a Banach space, we have to restrict to admissible forms. We  show that the Lie algebra cohomology of the $K$-Lie algebra $\lg$ of $G$ with coefficients in $V$ can be computed as the cohomology in $BG$ of the complex of forms on $G$ with coefficients in $V$.

Let $V$ be a separated locally convex $K$-vector space.  For a submersion $Y \to X$ we denote
by $\Omega^k(Y/X,V)$ the vector space of relative analytic $k$-forms with values in $V$. Here, a $k$-form
$\omega$ is called analytic if, for any $k$-tuple $\phi_1,\dots, \phi_k$ of local sections of the 
vertical tangent bundle $T(Y/X)$, the function $Y\to V, y\mapsto  \omega(y)(\phi_1(y), \dots, \phi_k(y))$ is
analytic.
It suffices to check this for the local sections given by a local frame of $T(Y/X)$. 
In particular, every $y\in Y$ admits a neighborhood $U$ and a BH-space $W \hookrightarrow V$ such that 
$\omega|_{U}$ is in the image of $\Omega^{k}(U/X,W) \hookrightarrow \Omega^{k}(U/X, V)$.
It follows that the exterior derivative $d\omega$ is a well-defined form in $\Omega^{k+1}(Y/X,V)$.
If $V$ is finite dimensional, this is the usual notion of analytic forms.

For a fixed manifold $Y$, we have a complex of sheaves $\ul{\Omega}^{*}(Y,V)$ on $\La$ defined by
\[
\ul{\Omega}^{*}(Y,V)(X) := \Omega^{*}(X\times Y/X,V).
\]
Let $V$ be a locally analytic representation of $G$. We would like to equip this complex with a $y(G)$-action. As for functions, we have to restrict to a subcomplex of admissible forms in order to do this. Again, one can ignore this, if one is only interested in the case that $V$ is a Banach space.

 A form $\omega \in \Omega^k(Y/X,V)$ is called \emph{admissible}, if the form $\hat\omega$ on $G\times Y/G\times X$ given by 
\[
\hat\omega(g,y)(\sx_{1}, \dots, \sx_k) := g\cdot(\omega(y)(\sx_1,\dots, \sx_k)), 
\]
where $g\in G, y\in Y, \sx_i\in T_{(g,y)}(G\times Y/G\times X)\cong T_y(Y/X)$, is analytic. 
Equivalently, $\omega$ is admissible iff for any $k$-tuple of local sections $\phi_1, \dots, \phi_k$ of $T(Y/K)$ as above the function $\omega(\phi_1,\dots, \phi_k)$ is admissible.
As in the case of functions, this is the case iff $\hat\omega|_{U\times Y}$ is analytic for some open subset $U \subseteq G$.
The admissible $k$-forms form a $C^{\an}(Y,K)$-submodule of $\Omega^k(Y/X,V)$, which we denote by $\Omega^k_{\ad}(Y/X,V)$. 
They are also stable under the exterior derivative: 
Let $\omega$ be an admissible form. Since $G$ acts on $V$ by continuous linear automorphisms we have $\widehat{d\omega} = d\hat\omega$, and this form is analytic.
Thus, the admissible forms $\Omega^{*}_{\ad}(Y/X,V)$ form a subcomplex of the de Rham complex $\Omega^{*}(Y/X,V)$. 
\begin{ex}
If $V$ is a Banach space, it follows from Example \ref{ex:Banach-space-reps} that any $V$-valued analytic $k$-form is admissible.  
\end{ex}

We now fix a manifold $Y$.
For an analytic map between manifolds $X' \to X$ the pull-back map $\Omega^{k}(X\times Y/X) \to \Omega^{k}(X'\times Y/X')$ preserves admissible forms. 
Since admissibility is a local condition, 
$\ul\Omega^{k}_{\ad}(Y,V)$, defined by
\[
\ul\Omega^k_\ad(Y,V)(X) := \Omega^k_\ad(X\times Y/X,V),
\]
is a subsheaf of $\ul{\Omega}^{k}(Y,V)$, and $\ul{\Omega}^{*}_{\ad}(Y,V) \subseteq \ul{\Omega}^{*}(Y,V)$ is a subcomplex.

We define an action of $y(G)$ on $\ul\Omega^k_\ad(G,V)$ as follows: Let $\rho\colon X \to G$ be an analytic map and $\omega\in \Omega^k_\ad(X \times G / X, V)$ an admissible form.
For any $h \in G$, left translation by $h^{-1}$ induces a map $T_{(x,g)}(X\times G/X) \to T_{(x,h^{-1}g)}(X\times G/X)$ written $\sx\mapsto h^{-1}\sx$. Using this notation 
we define $\rho\omega$ by the formula
\[
(\rho\omega)(x,g)(\sx_1,\dots, \sx_k) := \rho(x)\cdot \left(\omega(x, \rho(x)^{-1}g)(\rho(x)^{-1}\sx_1, \dots, \rho(x)^{-1}\sx_k)\right).
\]

\begin{lemma}
This is a well-defined $y(G)$-action.
\end{lemma}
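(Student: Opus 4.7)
The plan is to verify the lemma in three steps: (i) show that the formula produces an element of $\Omega^k_\ad(X\times G/X, V)$; (ii) check that the operation is compatible with pullback along analytic maps $X'\to X$, so that it defines a morphism of sheaves $y(G)\times \ul\Omega^k_\ad(G,V) \to \ul\Omega^k_\ad(G,V)$; and (iii) verify the unit and associativity axioms of a group action.

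For step (i), I would exhibit $\rho\omega$ as the pullback of $\hat\omega$ along an explicit analytic map. Consider $\Psi\colon X\times G \to G\times X\times G$ defined by $\Psi(x,g) = (\rho(x), x, \rho(x)^{-1}g)$; this covers the analytic base map $X \to G\times X$, $x\mapsto(\rho(x), x)$, and its vertical derivative sends a vertical tangent vector $\sx\in T_{(x,g)}(X\times G/X)\cong T_gG$ to the left translate $\rho(x)^{-1}\sx \in T_{\rho(x)^{-1}g}G$. Substituting into the defining formula for $\hat\omega$ one checks that $\rho\omega = \Psi^{*}\hat\omega$. Since $\omega$ is admissible, $\hat\omega$ is an analytic relative form, and pullback of analytic relative forms along analytic maps is again analytic; hence $\rho\omega$ is analytic. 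A completely parallel computation with $\tilde\Psi\colon G\times X\times G \to G\times X\times G$, $(h,x,g)\mapsto (h\rho(x), x, \rho(x)^{-1}g)$, identifies $\widehat{\rho\omega}$ with $\tilde\Psi^{*}\hat\omega$, which is again analytic; thus $\rho\omega$ is admissible.

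Step (ii) is a direct check: given an analytic map $f\colon X'\to X$ and writing $\rho' := \rho\circ f$, the chain rule together with the defining formula yield $(f\times\id_G)^{*}(\rho\omega) = \rho'\,((f\times\id_G)^{*}\omega)$, which is exactly the naturality statement needed for a morphism of sheaves on $\La$. For step (iii), taking $\rho$ to be the constant map with value the identity of $G$ makes the formula collapse to $\omega$ itself, so the identity section acts trivially. Given $\rho_1, \rho_2\colon X \to G$ with pointwise product $\rho_1\rho_2$, substituting the definition twice and using $(\rho_1\rho_2)(x)^{-1} = \rho_2(x)^{-1}\rho_1(x)^{-1}$ on both the base and the tangent vectors yields $\rho_1(\rho_2\omega) = (\rho_1\rho_2)\omega$.

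The hard part will be step (i): one has to correctly compute the vertical derivative of $\Psi$ and check that pullback of an analytic form with values in a locally convex space behaves well, namely that locally $\Psi^{*}\hat\omega$ factors through the same BH-space as $\hat\omega$ does. Once this identification is in place, analyticity of $\rho\omega$ and $\widehat{\rho\omega}$ is immediate, and the action axioms reduce to routine bookkeeping.
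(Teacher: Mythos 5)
Your proof is correct and follows essentially the same route as the paper: your map $\Psi$ is exactly the composite $\hat\rho\circ i_{1}$ of the paper's maps $\hat\rho(h,x,g)=(h\rho(x),x,\rho(x)^{-1}g)$ and $i_{1}(x,g)=(1,x,g)$, and your $\tilde\Psi$ is the paper's $\hat\rho$, so the identities $\rho\omega=\Psi^{*}\hat\omega$ and $\widehat{\rho\omega}=\tilde\Psi^{*}\hat\omega$ are precisely the paper's argument. The additional checks of sheaf-naturality and the unit/associativity axioms, which the paper leaves implicit, are routine and correctly indicated.
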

\begin{proof}
We have to show that $\rho\omega$ is analytic and admissible. Consider the analytic maps $\hat\rho\colon G\times X \times G \to G \times X \times G, (h,x,g) \mapsto (h\rho(x), x, \rho(x)^{-1}g)$ and $i_{1}\colon X \times G \hookrightarrow G\times X \times G, (x,g)\mapsto (1,x,g)$. Then $\rho\omega = i_{1}^{*}\hat\rho^{*}\hat\omega$, hence $\rho\omega$ is analytic. Similarly, we have $\widehat{\rho\omega}=\hat\rho^{*}\hat\omega$, hence $\rho\omega$ is admissible.
\end{proof}

We thus consider $\ul\Omega^k_\ad(G,V)$ as an abelian group object in $BG$. We want to show that it is acyclic.
Write $\widetilde V := \Hom(\bigwedge\nolimits^k\lg, V)$. The adjoint action of $G$ on $\lg$ and the given action of $G$ on $V$ induce a natural $G$-action on $\widetilde V$. 
\begin{lemma}\label{lem:Omega-induced}
This representation of $G$ on $\widetilde{V}$ is locally analytic.
We have an isomorphism 
\[
\ul\Omega^k_\ad(G,V) \cong \iHom_{BG}(\l y(G), \ul{\widetilde V}) \cong i_*i^*\ul{\widetilde V}.
\]
\end{lemma}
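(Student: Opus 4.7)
The plan is to use the right-invariant trivialisation of the tangent bundle of $G$ to identify $V$-valued forms on $G$ with $\widetilde V$-valued functions on $G$, and then to deduce the second isomorphism from Lemma~\ref{lem:iHom}. For the local analyticity of $\widetilde V=\Hom(\bigwedge^k\lg,V)$, I would choose a basis $(e_I)$ of $\bigwedge^k\lg$ to identify $\widetilde V$ with a finite power of $V$ as locally convex spaces. In these coordinates the orbit map reads $(g\phi)(e_I)=\sum_J c_{IJ}(g)\,g\phi(e_J)$, with analytic coefficients $c_{IJ}\colon G\to K$ coming from the adjoint representation. Each summand is analytic as a map $G\to V$, since scalar multiplication is continuous bilinear and the orbit maps on $V$ are analytic by assumption; so the orbit map on $\widetilde V$ is analytic.

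Next, for a manifold $X$, the right-invariant vector fields trivialise the vertical tangent bundle $T(X\times G/X)\cong (X\times G)\times\lg$ via the analytic frame $((x,g),\xi)\mapsto R_{g*}\xi$, where $R_{g*}\colon\lg\xrightarrow{\cong} T_gG$ is the differential of right translation. Applying this to the slots of a form yields
\[
F_\omega\colon X\times G\to \widetilde V,\qquad F_\omega(x,g)(\xi_1\wedge\cdots\wedge\xi_k) := \omega(x,g)(R_{g*}\xi_1,\dots,R_{g*}\xi_k).
\]
Since $R_{g*}$ is an analytic frame, $\omega$ is analytic iff $F_\omega$ is. For admissibility, a direct computation shows that $\widehat{F_\omega}$ (built from the $\widetilde V$-action on values, which involves the adjoint twist) and $F_{\hat\omega}$ (built from the bare $V$-action on values of the form) differ by left multiplication with the analytic invertible matrix $(c_{IJ}(h^{-1}))_{IJ}$ arising from $\ad$. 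So $\widehat{F_\omega}$ is analytic iff $\hat\omega$ is, i.e., $\omega$ is admissible iff $F_\omega$ is. Naturality in $X$ is automatic, and the Yoneda lemma gives a sheaf isomorphism $\ul\Omega^k_\ad(G,V)\cong \iHom_{\Sh(\La)}(y(G),\ul{\widetilde V})$.

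To lift this to an isomorphism of objects in $BG$, I would verify $y(G)$-equivariance. The key input is the identity $L_{\rho(x)^{-1}*}R_{g*}\xi = R_{\rho(x)^{-1}g*}(\ad(\rho(x)^{-1})\xi)$, which follows at once from $L_h\circ R_g=R_g\circ L_h$ and is the source of the adjoint twist. Substituting this into the formula for $\rho\omega$ spelled out just before the lemma gives $F_{\rho\omega}(x,g)=\rho(x)\cdot_{\widetilde V}F_\omega(x,\rho(x)^{-1}g)$, which is precisely the $y(G)$-action on $\iHom_{BG}(\l y(G),\ul{\widetilde V})$ described at the start of Section~\ref{sec:1}. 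The second isomorphism $\iHom_{BG}(\l y(G),\ul{\widetilde V})\cong i_*i^*\ul{\widetilde V}$ is then immediate from Lemma~\ref{lem:iHom} applied with $\cA=*$ and $\cB=\ul{\widetilde V}$ (so that $\l y(G)\times*\cong \l y(G)$ and $\iHom_{\Sh(\La)}(*,i^*\ul{\widetilde V})\cong i^*\ul{\widetilde V}$). I expect the main subtlety to be the choice of right-invariant rather than left-invariant trivialisation: both the admissibility check and the equivariance depend on it, since the adjoint twist defining the action on $\widetilde V$ emerges precisely from how left translation of tangent vectors interacts with the right-invariant frame.
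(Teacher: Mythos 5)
Your proposal is correct and follows essentially the same route as the paper: the right-invariant trivialisation $T(X\times G/X)\cong(X\times G)\times\lg$ identifying admissible forms with admissible $\widetilde V$-valued functions, a coordinate/bilinearity argument (the paper cites \cite[Satz 2.4.3]{Feaux}) showing that admissibility with the adjoint-twisted action on $\widetilde V$ matches admissibility of $\hat\omega$, and Lemma~\ref{lem:iHom} for the final identification with $i_*i^*\ul{\widetilde V}$. Your explicit verification of equivariance via $L_{h*}R_{g*}\xi=R_{hg*}(\ad(h)\xi)$ spells out a step the paper only asserts, but the substance is the same.
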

\begin{proof}
Let $Y$ be a manifold.
We claim that a function $f\colon Y \to \widetilde{V}$ is admissible if and only if the function $f_{\sx}\colon Y\to V, y \mapsto f(y)(\sx),$ is admissible for every $\sx \in \bigwedge^k\lg$.
Taking $Y$ to be a point this implies the first assertion of the lemma.

To prove the claim, assume first that $f$ is admissible. We have $\widehat{f_{\sx}}(g,y) = g(f(y)(\sx)) = (gf(y))(g\sx) = \hat f(g,y)(g\sx)$. 
The function $\hat f$ is analytic by assumption and so is
$g\mapsto g\sx$. Since the evaluation $\widetilde{V} \times \bigwedge^k\lg \to V$ is continuous and bilinear, and since $\bigwedge^k\lg$ is finite dimensional, \cite[Satz 2.4.3]{Feaux} implies that $\widehat{f_{\sx}}$ is analytic.

To see the converse, let $\sx_1, \dots, \sx_N$ be a basis of $\bigwedge^k\lg$ and $\sx_1^*, \dots, \sx_N^*$ the dual basis of $(\bigwedge^k\lg)^\vee$. We can write $f$ as a sum $f = \sum_{i=1}^N f_{\sx_i}\sx_i^*$ with $f_{\sx_i}$ admissible. Then $\hat f(g,y) = \sum_i \widehat{f_{\sx_i}}(g,y) g(\sx_i^*)$ and by \loccit again, $\hat f$ is analytic. 

We now prove the second assertion of the lemma.
For any manifold $X$, right translations by elements of $G$ induce a trivialization of the vertical tangent
bundle $T(X\times G/X) \cong (X\times G)\times \lg$. This gives a natural isomorphism of vector spaces
\begin{equation*}\label{mar25neu}
\Omega^k(X\times G/X,V) \cong C^{\an}(X\times G, \widetilde V).
\end{equation*}
Using the above claim one sees that this isomorphism restricts to an isomorphism
\[
\Omega^k_{\ad}(X\times G/X,V) \cong C^{\ad}(X\times G, \widetilde{V}).
\]
Under this isomorphism, the $y(G)(X)$-action on the left-hand side corresponds to the action on the right-hand side induced by left translations on $X\times G$ and the action on $\widetilde{V}$ mentioned above.
Using the isomorphism $C^{\ad}(X\times G,\widetilde{V}) \cong \iHom_{BG}(\l y(G), \ul{\widetilde V})(X)$,
this gives the first isomorphism stated in the Lemma. The second follows immediately from Lemma \ref{lem:iHom}.
\end{proof}

\begin{cor}\label{cor:CohomForms}
We have
\[
H^i(G,\ul{\Omega}^k_\ad(G,V)) \cong 
\begin{cases}
 \Hom_{K}(\bigwedge^k\lg, V) & \text{ if } i=0,\\
0 & \text{ else.}
\end{cases}
\]
\end{cor}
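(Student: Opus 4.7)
The plan is to combine Lemma \ref{lem:Omega-induced} with Corollary \ref{cor:i*acy}, so the corollary comes out essentially for free.

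First, Lemma \ref{lem:Omega-induced} identifies $\ul\Omega^k_\ad(G,V)$ with $i_*i^*\ul{\widetilde V}$ as an abelian group object of $BG$, where $\widetilde V = \Hom_K(\bigwedge\nolimits^k\lg, V)$. Set $\cA := i^*\ul{\widetilde V}$, the underlying abelian sheaf of $\ul{\widetilde V}$ on $\La$ (i.e.\ with its $y(G)$-action forgotten). Then
\[
H^i(G, \ul\Omega^k_\ad(G,V)) \cong H^i(G, i_*\cA),
\]
and Corollary \ref{cor:i*acy} applies directly to the right-hand side, giving $0$ for $i>0$ and $\cA(*)$ for $i=0$.

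It remains to evaluate $\cA(*) = \ul{\widetilde V}(*)$. By Lemma \ref{lem:Omega-induced} the representation of $G$ on $\widetilde V$ is locally analytic, hence by the example following the definition of $\ul V$ we have $\ul{\widetilde V}(*) = \widetilde V = \Hom_K(\bigwedge\nolimits^k\lg, V)$, which is the claimed formula.

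There is no real obstacle: the work has been front-loaded into Lemma \ref{lem:Omega-induced} (the trivialization of $T(X\times G/X)$ by right translations, together with the admissibility bookkeeping) and into the general acyclicity statement Corollary \ref{cor:i*acy} (which rests on the strict paracompactness of manifolds in $\La$ ensuring exactness of sections over $G \times X$). The corollary itself is just the combination of these two facts plus the observation that $\widetilde V$ is locally analytic, so the proof amounts to quoting Lemma \ref{lem:Omega-induced} and applying Corollary \ref{cor:i*acy}.
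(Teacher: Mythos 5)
Your proof is correct and follows exactly the paper's own argument: combine Lemma \ref{lem:Omega-induced} with Corollary \ref{cor:i*acy} to kill the higher cohomology, and identify $H^0$ with $\ul{\widetilde V}(*) = \widetilde V$ using local analyticity of the $G$-action on $\widetilde V$. Nothing is missing.
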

\begin{proof}
By Lemma \ref{lem:Omega-induced} and Corollary \ref{cor:i*acy} the higher cohomology groups vanish, and 
\begin{equation}\label{eq:IdentificationInvariantForms}
H^0(G, \ul\Omega^k_{\ad}(G,V))\cong \ul{\widetilde V}(*) = \Hom_{K}(\bigwedge\nolimits^k\lg, V). \qedhere
\end{equation}
\end{proof}
Explicitly, this isomorphism is given by evaluating a form at $1\in G$.

The differential $d$ of the complex $\ul\Omega^*_\ad(G,V)$ is compatible with the $y(G)$-action. Hence we can view $\ul\Omega^*_\ad(G,V)$ as a complex in $BG$ and we can compute its hypercohomology.

We now assume that $V$ is barrelled, i.e., that every closed convex absorbing subset is open (see \cite[\S 6]{Schneider-NFA}). 
For example, any complete metrizable locally convex space, in particular any Banach space, is barrelled.
Differentiating the orbit maps $g\mapsto gv$ then induces an action of the Lie algebra $\lg$ on $V$ \cite[S\"atze 3.1.3, 3.1.7]{Feaux}.  
\begin{cor}\label{cor:deRhamLie}
We have natural isomorphisms
\[
H^i(G, \ul\Omega^*_\ad(G,V)) \cong H^{i}(\lg, V)
\]
where the right-hand side is  Lie algebra cohomology. 
\end{cor}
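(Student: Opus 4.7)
The plan is to combine two observations: first, that the $\Gamma$-acyclicity of each term of $\ul\Omega^*_\ad(G,V)$ makes its hypercohomology computable as the cohomology of the complex of global sections; and second, that the resulting complex coincides with the Chevalley-Eilenberg complex computing $H^*(\lg, V)$.

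For the first, Corollary \ref{cor:CohomForms} guarantees that each $\ul\Omega^k_\ad(G,V)$ is $\Gamma$-acyclic. Hence the hypercohomology spectral sequence $E_1^{p,q} = H^q(G, \ul\Omega^p_\ad(G,V)) \Rightarrow H^{p+q}(G, \ul\Omega^*_\ad(G,V))$ collapses on the row $q = 0$, yielding a natural isomorphism $H^i(G, \ul\Omega^*_\ad(G,V)) \cong H^i(\Gamma(\ul\Omega^*_\ad(G,V)))$. By Corollary \ref{cor:CohomForms} again, evaluation at $1 \in G$ identifies each $\Gamma(\ul\Omega^k_\ad(G,V))$ with $\Hom_K(\bigwedge^k \lg, V)$, i.e., the $k$-th term of the Chevalley-Eilenberg complex $C^*(\lg, V)$.

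The crux is to check that the de Rham differential transports to the Chevalley-Eilenberg differential under this termwise identification. To this end, I would take a $G$-invariant admissible form $\omega$ with $\omega(1)=\xi$, extend a tuple $\sx_0, \dots, \sx_k \in \lg$ to right-invariant vector fields on $G$, and apply the Cartan formula to $(d\omega)(1)(\sx_0, \dots, \sx_k)$. The $G$-equivariance of $\omega$, expressed via the right-translation trivialization $T(G) \cong G \times \lg$ used in the proof of Lemma \ref{lem:Omega-induced}, lets one compute each directional derivative at $1$ as the sum of a term involving the infinitesimal $\lg$-action on $V$ (whose existence is precisely why barrelledness is assumed) and a term involving $\ad$. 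Combined with the commutator terms from the Cartan formula, this yields the Chevalley-Eilenberg formula
\[
d\xi(\sx_0, \dots, \sx_k) = \sum_i (-1)^i \sx_i \cdot \xi(\sx_0, \dots, \widehat{\sx_i}, \dots, \sx_k) + \sum_{i<j} (-1)^{i+j} \xi([\sx_i, \sx_j], \sx_0, \dots, \widehat{\sx_i}, \dots, \widehat{\sx_j}, \dots, \sx_k).
\]

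The main obstacle is the bookkeeping in this last computation: one must consistently track the right-translation trivialization of $TG$, the direction of the $G$-action on forms, and the sign arising when differentiating the orbit map $g \mapsto g \cdot v$. However, this step is a formal repetition of the classical argument identifying left-invariant differential forms on a real Lie group with the Chevalley-Eilenberg complex, and no functional-analytic subtleties intervene beyond the use of barrelledness to define the $\lg$-action on $V$. Naturality in $V$ is then automatic from the functoriality of all constructions involved.
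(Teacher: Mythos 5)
Your argument follows the paper's proof exactly: Corollary \ref{cor:CohomForms} gives termwise $\Gamma$-acyclicity, so the hypercohomology is computed by the complex of global sections, which evaluation at $1\in G$ identifies with $\Hom_{K}(\bigwedge^{*}\lg,V)$; the paper then simply asserts that the induced differential is the Chevalley--Eilenberg one. Your additional sketch of that last identification (via the right-translation trivialization of $T(G)$ and the Cartan formula, with barrelledness supplying the $\lg$-action on $V$) is precisely the standard computation the paper leaves implicit, and it is correct.
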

\begin{proof}
Corollary \ref{cor:CohomForms} gives an isomorphism
\[
H^i(G, \ul{\Omega}_{\ad}^*(G,V)) \cong H^{i}\left( \Hom_{K}(\bigwedge\nolimits^{*}\lg,V)\right),
\]
where the differential on $\Hom_{K}(\bigwedge\nolimits^{*}\lg,V)$ is induced from the de Rham differential via \eqref{eq:IdentificationInvariantForms}. This is precisely the Chevalley-Eilenberg complex computing Lie algebra cohomology.
\end{proof}

\section{Differential forms and locally analytic group cohomology}
\label{sec:proof}

 As before, we fix a locally analytic representation
$G\to \Aut(V)$. In this section we use the Poincar\'e lemma to compare the hypercohomology of the complex of $V$-valued admissible forms with locally analytic group cohomology, and we give the proof of the Theorem announced in the Introduction.

Fix a manifold $Y$. 
A function $f\colon Y\times X \to V$ will be called locally constant along $Y$, if, for every $(y,x)\in Y\times X$, there exist open neighborhoods $Y' \subseteq Y$ of $y$ and $X'\subseteq X$ of $x$ such that $f|_{Y'\times X'}$ factors through the projection $Y'\times X'\to X'$.
We define 
\[
\ul C^{\lc}_{\ad}(Y,V)(X) := \{f\in C^{\ad}(X\times Y,V)\,|\, f \text{ is locally constant along $Y$}\}.
\]
It is easy to see that $X\mapsto \ul C^{\lc}_{\ad}(Y,V)(X)$ defines a sheaf on $\La$.
\begin{prop}\label{prop:DeRhamResolution}
The inclusion in degree 0
\[
\ul C^{\lc}_{\ad}(Y,V) \to \ul\Omega^{*}_{\ad}(Y,V)
\]
is a quasi-isomorpism.
\end{prop}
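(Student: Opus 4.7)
The strategy is to check the quasi-isomorphism at the level of cohomology sheaves on $\La$. In degree $0$ the claim is immediate: the kernel of $d\colon \ul\Omega^0_\ad(Y,V) \to \ul\Omega^1_\ad(Y,V)$ consists of those admissible functions on $X\times Y/X$ whose relative derivative vanishes, which is exactly the condition of being locally constant along $Y$. Hence the induced map on $\mathcal{H}^0$ is the identity on $\ul C^{\lc}_\ad(Y,V)$, and the remaining task is to show $\mathcal{H}^k = 0$ for $k \geq 1$.

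To prove this, I would show that every closed admissible form $\omega \in \Omega^k_\ad(X\times Y/X, V)$ becomes exact locally in $X$. Given a point $x_0 \in X$, pick a polydisc chart $X' \ni x_0$; and by strict paracompactness (Remark~\ref{rem:strictly-paracompact}), cover $Y$ by a pairwise disjoint family of polydisc charts, $Y = \bigsqcup_\alpha Y_\alpha$. Then $X' \times Y$ decomposes into the disjoint union of products of polydiscs $X' \times Y_\alpha$, and both the sheaf of admissible forms and the sheaf of locally constant admissible functions split as products over $\alpha$, so it suffices to construct an admissible primitive of $\omega|_{X' \times Y_\alpha}$ on each factor. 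On such a product of polydiscs the classical nonarchimedean Poincar\'e lemma applies: expanding the form as $\omega = \sum_I f_I(x,y)\,dy_I$ with each $f_I$ a convergent power series in $(x,y)$, the standard Koszul contracting homotopy $h$, obtained by term-wise formal integration (valid in characteristic $0$), satisfies $dh + hd = \id$ in positive degrees, so $\eta := h\omega$ is a primitive of a closed $\omega$ on $X' \times Y_\alpha$.

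The main delicate step is that $h$ preserves admissibility. Since $h$ is built from linear operations in the $y$-variables only, and the $G$-action on $V$ is by continuous $K$-linear maps, $h$ commutes with the $G$-action: concretely, $\widehat{h\omega}(g,x,y) = g\cdot(h\omega)(x,y) = h(\hat\omega)(g,x,y)$, where on the right $h$ is the same homotopy applied to the analytic form $\hat\omega$ on $G\times X' \times Y_\alpha / G\times X'$. Since $\hat\omega$ is analytic by admissibility of $\omega$ (locally factoring through a BH-space $W \hookrightarrow V$ so that the Banach-valued Poincar\'e lemma gives an analytic primitive), $h(\hat\omega)$ is analytic, hence $\widehat{h\omega}$ is analytic and $\eta = h\omega$ is admissible. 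Recombining the $\eta$'s over the disjoint pieces $Y_\alpha$ yields an admissible primitive of $\omega|_{X' \times Y}$, so $\mathcal{H}^k = 0$.
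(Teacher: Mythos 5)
Your proof is correct and follows essentially the same route as the paper's: reduce to a local statement on products of polydiscs via strict paracompactness, apply the nonarchimedean Poincar\'e homotopy $h$, and establish admissibility of the primitive through the identity $\widehat{h\omega}=h(\hat\omega)$, which holds because $h$ acts by continuous linear operations on the coefficients and hence commutes with the (continuous, linear) $G$-action on values. The only detail the paper treats more carefully is that term-wise integration divides coefficients by integers $|I|+q$, whose inverses can be large in a nonarchimedean field of characteristic $0$, so $h$ only maps $\epsilon$-convergent forms to $\epsilon'$-convergent ones for a strictly smaller multiradius $\epsilon'<\epsilon$; this forces a further shrinking of the polydisc $Y_\alpha$, which is harmless for the local vanishing you are proving but should be said.
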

If $V$ is a Banach space, this is just the Poincar\'e lemma, and its usual proof works. For general locally convex $V$, it is a little bit more complicated, since we have to prove admissibility of primitives.
\begin{proof}
The map clearly induces an isomorphism on $H^0$, and it remains to show that
$H^k(\ul{\Omega}_{\ad}^*(Y,V))=0$ for $k>0$.

Let $X$ be a manifold, and let $\omega$ be a closed form in $\Omega^k_\ad(X\times Y/X, V)$. We will show that there is an $\eta\in \Omega^{k-1}_\ad(X\times Y/X,V)$ such that $d\eta=\omega$. Since all manifolds are strictly paracompact, it is enough to construct such an $\eta$ locally on $X$ and $Y$ (see Remark \ref{rem:strictly-paracompact}).

The rest of the proof uses some results and notations from the Appendix. It can be skipped on first reading.
Since $d\hat\omega = \widehat{d\omega} = 0$, the form $\hat\omega\in\Omega^k(G \times X \times Y/G\times X,V)$ is  closed. 
Replacing $G$ be a small open neighborhood of 1 and using local charts, we may assume that there are multiradii $\delta\in \R^m_+, \epsilon\in \R^n_+$ such that $G\times X \cong B_\delta(0) \subset K^m, Y\cong B_\epsilon(0) \subset K^n$, and a BH-space $W\hookrightarrow V$ such that $\hat\omega$ is given by a power series in $F_\delta(\Omega^k_\epsilon(W))$ (cf.~\eqref{eq:ConvergentRelativeForms}).
Choose a multiradius $\epsilon' < \epsilon$. The homotopy operator $h\colon \Omega^k_\epsilon(W) \to \Omega^{k-1}_{\epsilon'}(W)$ given by  lemma \ref{lem:Poincare} induces an operator $h\colon F_\delta(\Omega^k_\epsilon(W)) \to F_\delta(\Omega^{k-1}_{\epsilon'}(W))$. 
We define $\widetilde{\eta}:=h(\hat\omega)$. Hence $\widetilde{\eta}$ represents a relative analytic $k-1$-form on $G\times X \times Y'/G\times X$ with an open subset $Y'\subset Y$. 
Since $\hat\omega$ is closed, we have $d\widetilde{\eta}=\hat\omega|_{G\times X\times Y'}$.

For $g\in G$, let $i_g\colon X\times Y \to G\times X \times Y$ (and similarly with $Y$ replaced by $Y'$) be the inclusion $(x,y)\mapsto (g,x,y)$. 
We set $\eta:= i_1^*\widetilde{\eta}$. Clearly, $d\eta = i_1^*d\widetilde{\eta} = i_1^*\hat\omega = \omega$. 
To prove  that $\eta$ is admissible, we show that $\hat\eta=\widetilde{\eta}$.
Let $\Phi_g\colon V\to V$ be the continuous automorphism given by the action of $g$. 
We have to check that $i_g^*\widetilde{\eta} = \Phi_g\circ\eta$.
By restriction, $\Phi_{g}$ induces a continuous isomorphism of BH-spaces $W \to g(W)$ (more precisely, we view $W$ as a linear subspace of $V$ and let $g(W)$ be its image under the action of $g\in G$ with Banach space structure induced from $W$ via the linear isomorphism $\Phi_{g}|_{W}\colon W \xrightarrow{\cong} g(W)$).
We have
\begin{align*}
i_g^*\widetilde{\eta} &= i_g^*(h(\hat\omega)) &&\text{(by definition)} \\
&= h(i_g^*\hat\omega) &&\text{(using \eqref{diag:PhiEval} with $\Phi=h\colon \Omega^{q}_{\epsilon}(W) \to \Omega^{q-1}_{\epsilon'}(W)$)} \\
&= h(\Phi_g\circ \omega) &&\text{(definition of $\hat\omega$)} \\
&= \Phi_g\circ h(\omega) &&\text{(Lemma \ref{lem:NaturalityIntegration} for $\Phi=\Phi_g\colon W \to g(W)$)} \\
&= \Phi_g\circ \eta &&\text{(since $h(\omega)=h(i_{1}^{*}\hat\omega)=i_{1}^{*}h(\hat\omega) = i_{1}^{*}\hat\eta=\eta$).} \qedhere
\end{align*}
\end{proof}

\begin{proof}[Proof of the Theorem]
The sheaf $\ul C^{\lc}_{\ad}(G,V)$ carries a natural $y(G)$-action induced by left translations on $G$ and the given action on $V$.
 By Proposition \ref{prop:DeRhamResolution} and Corollary \ref{cor:deRhamLie} we have  isomorphisms
\begin{equation}\label{eq:CohomIsos}
H^{*}(G,\ul C^{\lc}_{\ad}(G,V)) \cong H^{*}(G, \ul\Omega^{*}_{\ad}(G,V)) \cong H^{*}(\lg, V).
\end{equation}
As in the proof of  Proposition \ref{prop:lacochains}, Proposition \ref{prop:cochains-neu} implies that $H^{*}(G,\ul C^{\lc}_{\ad}(G,V))$ is the cohomology of the complex $C^{\lc}_{G}(G\times E_{\dot}G,V)$ of $G$-equivariant analytic functions $G\times E_{p}G\to V$ that are locally constant along the first factor.

Since the open subgroups $G' \subseteq G$ form  a fundamental system of neighborhoods of $1\in G$ (see \cite[Lemma
18.7]{Schneider}), we have an isomorphism
\[
\colim_{G'\subset G \text{ open}} C^{\an}_{G'}(E_{\dot}G', V) \cong 
\colim_{G'\subset G \text{ open}} C^{\lc}_{G'}(G'\times E_{\dot}G', V).
\]
Because taking the colimit over a directed system is exact, we see that 
\[
\colim_{G'\subset G} H^*_{\an}(G', V) \to \colim_{G'\subset G} H^*(G', \ul{C}^{\lc}(G',V))
\]
is an isomorphism. Since the isomorphisms \eqref{eq:CohomIsos} are compatible with the restriction to
open subgroups, the claim follows.
\end{proof}

There is an additional action of $G$ on $\ul C^{\lc}_{\ad}(G,V)$ and on $\ul\Omega^*_{\ad}(G,V)$ induced by right translations on $G$. This action is compatible with the given $y(G)$-action.
It induces a $G$-action on the cohomology groups. Via the isomorphism \eqref{eq:CohomIsos} this corresponds to the $G$-action on $H^*(\lg,V)$ induced by the adjoint action on $\lg$ and left multiplication on $V$.
\begin{cor}\label{cor:LieAlgcohomforcompactG}
If $G$ is compact, there is a natural isomorphism
\[
H^*_{\an}(G,V) \cong H^*(\lg, V)^G.
\]
\end{cor}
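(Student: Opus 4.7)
The plan is to prove the isomorphism at the cochain level, identifying $H^*_{\an}(G,V)$ with the $G$-invariants inside the cohomology computing $H^*(\lg, V)$. By Proposition~\ref{prop:lacochains}, $H^*_{\an}(G,V)$ is the cohomology of $A^{\bullet} := C^{\an}_G(E_\bullet G, V)$, and, as in the proof of the Theorem, $H^*(\lg, V)$ is the cohomology of $B^{\bullet} := C^{\lc}_G(G\times E_\bullet G, V)$. I first observe that the inclusion $A^\bullet \hookrightarrow B^\bullet$ sending $f$ to $F(g, g_0, \ldots, g_p) := f(g_0, \ldots, g_p)$ identifies $A^\bullet$ with the subcomplex $(B^\bullet)^G$ of right-translation invariants---for any $G$, right-$G$-invariance of $F$ forces constancy in the first argument. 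Combined with the identification of the right-translation action on $H^*(\lg,V)$ with the natural adjoint-plus-representation action (recalled in the paragraph preceding the Corollary), the comparison map thus factors as $H^*_{\an}(G,V) \to H^*(\lg,V)^G \hookrightarrow H^*(\lg,V)$.

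The heart of the argument will be to construct a $G$-equivariant chain-map retraction $\mathrm{Avg} \colon B^\bullet \to A^\bullet$ by averaging. For compact $G$ and $F \in B^p$, I would first use strict paracompactness (Remark~\ref{rem:strictly-paracompact}) plus compactness of $G$ and of $E_pG = G^{p+1}$ to produce a finite clopen partition $G = \bigsqcup_{i=1}^N U_i$ such that $F$ is constant in the first $G$-argument on each $U_i$. The intersection $H := \bigcap_i \{h \in G : U_i h = U_i\}$ is then an open subgroup of finite index: each right-stabilizer of a clopen subset $U_i$ contains an open neighbourhood of $1$ (cover $U_i$ by finitely many opens $V_{u_j}$ with $V_{u_j} W_{u_j} \subseteq U_i$, using continuity of multiplication and compactness of $U_i$, then intersect the $W_{u_j}$), hence is itself open, and openness plus compactness of $G$ gives finite index. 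Since $F$ is $H$-invariant and $K$ has characteristic $0$, the formula
\[
\mathrm{Avg}(F) := \frac{1}{[G:H]} \sum_{gH \in G/H} g \cdot F
\]
defines an element of $(B^p)^G = A^p$ that is independent of the choice of $H \subseteq \mathrm{Stab}(F)$. Standard verifications (using a common $H$ for finite collections of cochains to get $K$-linearity, plus $G$-equivariance of the Koszul differential) show that $\mathrm{Avg}$ is a $G$-equivariant chain map retracting the inclusion $A^\bullet \hookrightarrow B^\bullet$.

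The retraction splits $B^\bullet = A^\bullet \oplus \ker\mathrm{Avg}$ as complexes, hence $H^*(B^\bullet) = H^*(A^\bullet) \oplus H^*(\ker\mathrm{Avg})$. The remaining step is to show that $H^*(\ker\mathrm{Avg})^G = 0$: for a $G$-invariant class $[F] \in H^*(\ker\mathrm{Avg})$, choose $\eta_g \in \ker\mathrm{Avg}$ with $gF - F = d\eta_g$ to get $\mathrm{Avg}(F) - F = d\bigl(\frac{1}{[G:H]}\sum_{gH} \eta_g\bigr)$, whence $[\mathrm{Avg}(F)] = [F]$; but $F \in \ker\mathrm{Avg}$ forces $\mathrm{Avg}(F) = 0$, so $[F] = 0$. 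Combined with the trivial $G$-action on $H^*(A^\bullet)$, this yields $H^*(B^\bullet)^G = H^*(A^\bullet)$, i.e., $H^*(\lg,V)^G \cong H^*_{\an}(G, V)$. The hard part is the verification that the partition-stabilizer $H$ is open of finite index---this is precisely where compactness is essential, since without it the $G$-orbits of cochains need not be finite and no averaging is available, which is also why in the non-compact case the Theorem must pass to the colimit over open subgroups instead.
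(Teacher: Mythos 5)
Your proof is correct, and it rests on the same two pillars as the paper's own argument: compactness forces every cochain in $C^{\lc}_{G}(G\times E_{\dot}G,V)$ to be constant along a finite clopen partition of the first factor, hence invariant under an open subgroup $H$ of finite index, and characteristic $0$ permits averaging over $G/H$. The homological packaging, however, is genuinely different. The paper rewrites the locally constant complex as a filtered colimit $\colim_{H\unlhd G}C^{\an}_{G}(G/H\times E_{\dot}G,V)$ over open \emph{normal} subgroups and then simply invokes exactness of filtered colimits together with exactness of $(-)^{G/H}$ for the finite group $G/H$; the identification of the $G/H$-invariants with $C^{\an}_{G}(E_{\dot}G,V)$ finishes the proof with no further work. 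You instead assemble the coset averages into a single $G$-equivariant idempotent retraction $\mathrm{Avg}$ of the whole complex onto the invariant subcomplex $A^{\dot}=(B^{\dot})^{G}$, split off the kernel, and kill its invariant cohomology by the standard transfer argument. Your route avoids the colimit description entirely, which is a real structural difference, but it pays for this with the extra step $H^{*}(\ker\mathrm{Avg})^{G}=0$ --- precisely what ``invariants under a finite group is an exact functor'' buys for free --- and with the point-set verifications that the partition stabilizer is open of finite index and that $\mathrm{Avg}$ is independent of the choice of $H$; you identify all of these and they do go through. One small polish: when showing that the right stabilizer of a clopen $U_{i}$ is open, take the neighbourhood $W=\bigcap_{j}W_{u_{j}}$ of $1$ to be symmetric (or note that a subgroup containing a nonempty open set is open) in order to upgrade $U_{i}W\subseteq U_{i}$ to the required equality $U_{i}h=U_{i}$ for $h\in W$.
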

\begin{proof}
Since $G$ is compact, every open subgroup is of finite index and contains an open normal subgroup. 
If $X$ is a compact manifold, every function in $\ul C^{\lc}_{\ad}(G,V)(X)$ factors through $G/H\times X$ for some open normal subgroup $H \unlhd G$. Thus -- using the notation from the previous proof --
\[
\ul C^{\lc}_{G}(G\times E_\dot G, V) = \colim_{H\unlhd G \text{ open}} C^{\an}_{G}(G/H \times E_\dot G, V). 
\]
Since the colimit over a directed system is exact, this induces an isomorphism $H^*(\lg, V) \cong \colim_{H\unlhd G} H^*\left(C^{\an}_{G}(G/H \times E_\dot G, V)\right)$.
Since each quotient $G/H$ is finite, and taking invariants under a finite group is an exact functor on $K$-vector spaces with an action by that group, we get
\begin{align*}
H^*(\lg,V)^G &\cong \colim_{H\unlhd G} H^*\left(C^{\an}_{G}(G/H\times E_\dot G,V)\right)^{G/H} \\
&\cong \colim_{H\unlhd G} H^*(C^{\an}_{G}(G/H\times E_\dot G,V)^{G/H}) \\
&\cong \colim_{H\unlhd G} H^*(C^{\an}_{G}(E_\dot G,V)) \cong H^*_{\an}(G,V). \qedhere
\end{align*}
\end{proof}

\section{Explicit description of the comparison map}
\label{sec:explicit}

We want to describe an explicit map of complexes which induces the comparison map $H^{*}_{\an}(G,V) \to H^{*}(\lg,V)$.  
Recall that $H^{*}_{\an}(G,V)$ is  computed by the complex of homogeneous locally analytic cochains $C^{\an}_{G}(E_{\dot}G,V)$, and that 
$H^{*}(\lg,V)$ is computed by the complex of $G$-invariant admissible differential forms $\Omega^{*}_{\ad}(G,V)^{G}$.

For integers $p\geq 0$ and $0 \leq i \leq p$, we denote by $d_{i}$ the partial exterior derivative in the direction of the $(i+1)$-th factor of the product $E_{p}G= G^{p+1}$. We denote by $\Delta_{p}\colon G \to E_{p}G$ the diagonal map.
For $f\in C^{\an}(E_{p}G,V)$ we set
\[
\Psi(f) := \Delta_{p}^{*}(d_{1}d_{2}\dots d_{p}f) \in \Omega^{p}(G,V).
\]
\begin{prop}
The map $\Psi$ induces a morphism of complexes $C^{\an}_{G}(E_{\dot}G,V)\to \Omega^{*}_{\ad}(G;V)^{G}$, which on cohomology  groups agrees with the comparison map $H^{*}_{\an}(G,V) \to H^{*}(\lg, V)$.
\end{prop}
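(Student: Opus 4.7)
My plan breaks the proof into three steps: checking $\Psi$ is well-defined, checking it is a chain map, and identifying it on cohomology with the comparison map.

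First, I would verify that $\Psi(f)\in\Omega^{p}_{\ad}(G,V)^{G}$. Admissibility holds because each partial differential $d_{i}$ is locally given by derivatives in finitely many directions of $G$, which preserve admissibility by arguments parallel to those used in Lemma~\ref{lem:Omega-induced}; pulling back by the analytic map $\Delta_{p}$ also preserves admissibility. For $G$-invariance, the diagonal left translation $L_{h}$ on $E_{p}G$ satisfies $\Delta_{p}\circ L_{h} = L_{h}\circ\Delta_{p}$ and each $L_{h}^{*}$ commutes with the partial differentials; combined with $L_{h}^{*}f = h\cdot f$ (the $G$-equivariance of $f$), this gives $L_{h}^{*}\Psi(f) = h\cdot\Psi(f)$, which is the invariance in the sense of Section~\ref{sec:DifferentialFormsAndLiaAlgebraCohomology}.

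Second, I would verify $d\Psi(f) = \Psi(\partial f)$. Writing $\partial f = \sum_{i=0}^{p+1}(-1)^{i}s_{i}^{*}f$ with $s_{i}\colon E_{p+1}G\to E_{p}G$ the $i$-th face, the factor $s_{i}^{*}f$ is independent of $g_{i}$ for $i\geq 1$, so $d_{i}$ annihilates it, and only the $i=0$ term of $\partial f$ contributes. Using $d_{j}\circ s_{0}^{*} = s_{0}^{*}\circ d_{j-1}$ (for $j\geq 1$) together with $s_{0}\circ\Delta_{p+1}=\Delta_{p}$, one obtains $\Psi(\partial f) = \Delta_{p}^{*}(d_{0}d_{1}\cdots d_{p}f)$. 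Dually, $d\Psi(f) = \Delta_{p}^{*}\bigl(d(d_{1}\cdots d_{p}f)\bigr)$; writing $d = \sum_{i=0}^{p}d_{i}$, the anticommutation $d_{j}d_{k}=-d_{k}d_{j}$ and $d_{j}^{2}=0$ imply that only the $j=0$ summand of $d$ contributes, yielding $d_{0}d_{1}\cdots d_{p}f$ as well.

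Third, to identify $\Psi_{*}$ with the comparison map, I would use the double complex $K^{p,q}:=\Gamma(\l{E_{p}G},\ul\Omega^{q}_{\ad}(G,V))$, whose total cohomology computes $H^{*}(G,\ul\Omega^{*}_{\ad}(G,V))$. By Corollary~\ref{cor:CohomForms} the row cohomology is concentrated at $p=0$ with value $\Omega^{q}_{\ad}(G,V)^{G}$, and by Proposition~\ref{prop:DeRhamResolution} the column cohomology is concentrated at $q=0$ with value $C^{\lc}_{G}(E_{p}G\times G,V)$; both spectral sequences collapse, yielding edge quasi-isomorphisms $\iota_{2}$ (column $p=0$ inclusion) and $\iota_{1}$ (row $q=0$ inclusion, precomposed with the natural pullback $C^{\an}_{G}(E_{\bullet}G,V)\to C^{\lc}_{G}(E_{\bullet}G\times G,V)$) into $\mathrm{Tot}(K)$. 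The comparison map then factors as $\iota_{2,*}^{-1}\circ\iota_{1,*}$. To match $\Psi_{*}$ with this composite, I would construct, for each cocycle $f\in C^{\an}_{G}(E_{p}G,V)$, an explicit zig-zag of intermediate cochains $\widetilde{f}^{\,p-q,q}\in K^{p-q,q}$ for $q=0,\dots,p$, with $\widetilde{f}^{\,p,0} = \iota_{1}(f)$ and $\widetilde{f}^{\,0,p}$ corresponding to $\Psi(f)$ under the canonical identification $K^{0,p}\cong\Omega^{p}_{\ad}(G,V)^{G}$, satisfying the staircase relations that exhibit $\iota_{1}(f)$ and $\iota_{2}(\Psi(f))$ as cohomologous in $\mathrm{Tot}(K)$. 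A natural candidate is $\widetilde{f}^{\,p-q,q}:=\phi_{q}^{*}(d_{p-q+1}\cdots d_{p}f)$, where $\phi_{q}\colon E_{p-q}G\times G\to E_{p}G$ sends $(g_{0},\dots,g_{p-q},g)$ to $(g_{0},\dots,g_{p-q},g,\dots,g)$ (with $q$ copies of $g$); the staircase relations then reduce to partial-differential and face-map manipulations of the same type as in the second step.

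The main obstacle is this third step: tracking signs correctly, verifying admissibility and $G$-equivariance of each intermediate $\widetilde{f}^{\,p-q,q}$, and matching $\widetilde{f}^{\,0,p}$ to $\Psi(f)$ under the canonical iso $K^{0,p}\cong\Omega^{p}_{\ad}(G,V)^{G}$ (which involves $G$-equivariant transport of a form on $G\times G$ to a form on $G$) requires careful bookkeeping. This is the analytic counterpart of the classical van Est zig-zag; the conceptual framework is clear, but the explicit verification is the bulk of the work.
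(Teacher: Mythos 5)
Your first two steps coincide with the paper's: the admissibility/equivariance check and, in particular, the chain-map computation (only the $i=0$ face survives because $d_{i}\partial_{i}^{*}f=0$, and the remaining $d_{j}$'s kill all but the $j=0$ summand of $d$) are exactly the argument given there. Where you diverge is the identification with the comparison map. The paper sidesteps your entire third step by observing that $\Psi$ is defined not just on global sections but as a morphism of complexes of sheaves $\iHom_{BG}(\l{E_{\dot}G},\ul V)\to\ul\Omega^{*}_{\ad}(G,V)$ in $BG$, fitting into a commutative triangle under $\ul V$; since both complexes consist of $\Gamma$-acyclic objects (Corollaries \ref{cor:i*acy} and \ref{cor:CohomForms}), applying $\Gamma$ immediately shows that $\Gamma(\Psi)=\Psi$ computes the map $H^{*}(G,\ul V)\to H^{*}(G,\ul\Omega^{*}_{\ad}(G,V))\cong H^{*}(\lg,V)$, with no double complex and no homotopy. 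What the sheaf-level formulation buys is precisely the elimination of the zig-zag: the two resolutions are compared directly over $\ul V$, and functoriality of derived functors does the rest.

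Your double-complex route is the classical van Est argument and is viable in principle --- the required degeneration statements are all available (Corollary \ref{cor:CohomForms} for the rows, Proposition \ref{prop:DeRhamResolution} plus exactness of sections for the columns). But as written the decisive part is only a plan: the staircase relations for your candidate $\widetilde{f}^{\,p-q,q}=\phi_{q}^{*}(d_{p-q+1}\cdots d_{p}f)$ are asserted to ``reduce to manipulations of the same type'' without being checked, and this is where all the content lies. Two smaller points to watch if you pursue it: $K^{0,p}=\Gamma(\l{E_{0}G},\ul\Omega^{p}_{\ad}(G,V))$ is the space of \emph{all} admissible relative $p$-forms on $G\times G/G$ (equivalently all of $\Omega^{p}_{\ad}(G,V)$), not the invariants; the invariants $\Omega^{p}_{\ad}(G,V)^{G}$ appear only as the kernel of the first horizontal differential, so ``$\widetilde{f}^{\,0,p}$ corresponds to $\Psi(f)$'' needs to be phrased through that kernel. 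And your $\phi_{p}$ restricts at $g_{0}=1$ to $g\mapsto(1,g,\dots,g)$ rather than to $\Delta_{p}$, so matching $\widetilde{f}^{\,0,p}$ with $\Delta_{p}^{*}(d_{1}\cdots d_{p}f)$ requires invoking the $G$-equivariance of $f$ explicitly; this is exactly the kind of bookkeeping the sheaf-theoretic argument avoids.
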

\begin{rem}
Let us consider the special case that $K$ is $\Q_{p}$ and $V$ is finite dimensional. 
We want to indicate how the method of \cite{Huber-Kings} allows one to compare our map with Lazard's one.
The space of functions $C^{\an}(E_{p}G,V)$ has topological generators of the form $f_{0}\otimes \dots \otimes f_{p} \otimes v$ with $f_{i}\in C^{\an}(G,K)$ and $v\in V$. For such a function we have
\[
\Psi(f_{0}\otimes \dots \otimes f_{p}\otimes v)= f_{0}df_{1}\wedge \dots\wedge df_{p} \otimes v,
\]
and its image in $\Hom(\bigwedge^{p}\lg, V)$ is given by $f_{0}(1)df_{1}(1)\wedge \dots \wedge df_{p}(1) \otimes v$.

There is another simplicial model $\tilde E_{\dot}G$ for the universal $G$-bundle (cf.~\cite[\S 4.4]{Huber-Kings}), given by $\tilde E_{p}G = E_{p}G$, but with face maps 
\[
\tilde\partial_{i}(g_{0}, \dots, g_{n}) = 
\begin{cases}
(g_{0}, \dots, g_{i-1}, g_{i}g_{i+1}, g_{i+2}, \dots, g_{p}) & \text{if } i=0, \dots p-1, \\
(g_{0}, \dots, g_{p-1}) &\text{if } i=p.
\end{cases}
\]
The $G$-action on $\tilde E_{\dot}G$ is given by left multiplication on the first factor. 
There is a natural $G$-equivariant isomorphism $\tilde E_{\dot}G\cong E_{\dot}G$.
Huber and Kings show that Lazard's isomorphisms (for $G$ small enough) is induced by the map
\[
\Phi\colon C^{\an}_{G}(\tilde E_{\dot}G,V) \to \Hom(\bigwedge^{\dot}\lg, V),  
\]
$\Phi(f_{0}\otimes \dots \otimes f_{p}\otimes v) =  f_{0}(1)df_{1}(1)\wedge \dots \wedge df_{p}(1) \otimes v$ (see \cite[Prop.~4.6.1]{Huber-Kings}; this is formulated in the case of trivial coefficients, but can easily be adapted to our setting).
 The argument of \cite[Thm.~4.7.1]{Huber-Kings} shows that the composition of $\Phi$ with the isomorphism $C^{\an}_{G}(E_{\dot}G,V) \cong C^{\an}_{G}(E_{\dot}G,V)$ is homotopic to $\Psi$, hence both maps agree on cohomology groups.
\end{rem}
\begin{proof}[Proof of the Proposition]
From the proof of Proposition \ref{prop:cochains-neu} we have the acyclic resolution
$\ul V \xrightarrow{\simeq} \iHom_{BG}(\l{E_{\dot}G}, \ul V)$. For a manifold $X$ we have
\[
\iHom_{BG}(\l{E_{\dot}G}, \ul V)(X) = C^{\ad}(X\times E_{\dot}G, V)
\]
with $y(G)$-action induced from left translations on $E_{\dot}G$ and the action on $V$.
We  define $\Psi\colon C^{\ad}(X \times E_{p}G, V) \to \Omega^{p}_{\ad}(X\times G/X,V)$ by the same formula as above. We claim that this gives a morphism of complexes  $\Psi\colon \iHom_{BG}(\l{E_{\dot}G}, \ul V)\to \ul{\Omega}^{*}_{\ad}(G,V)$ in $BG$. 
\begin{proof}[Proof of the claim.]
One checks without difficulty that $\Psi$ is equivariant for the $y(G)$-action. 
Now consider $f\in C^{\ad}(E_{p}G\times X,V)$. Recall the face maps $\del_{i}\colon E_{p+1}G\to E_{p}G, (g_{0}, \dots, g_{p+1}) \mapsto (g_{0}, \dots, \widehat{g_{i}}, \dots, g_{p+1})$. The differential of the complex $C^{\ad}(E_{\dot}G\times X, V)$ maps $f$ to
\[
\sum_{i=0}^{p+1} (-1)^{i}\del_{i}^{*}f.
\]
Since $\del_{i}^{*}f$ is constant along the $(i+1)$-th factor $G$, we have $d_{i}(\del_{i}^{*}f)=0$. Since the partial derivatives commute up to sign, it follows that  
\begin{align*}
\Psi(\sum_{i=0}^{p+1} (-1)^{i}\del_{i}^{*}f) &= \Psi(\del_{0}^{*}f) \\
&= \Delta_{p+1}^{*}(d_{1}\dots d_{p+1}(\del_{0}^{*}f)) \\
&= \Delta_{p+1}^{*}(\del_{0}^{*}(d_{0}\dots d_{p}f)) \\
&= \Delta_{p}^{*}(d_{0}\dots d_{p}f) \\
&= \Delta_{p}^{*}(d (d_{1}\dots d_{p}f)) \\
&= d(\Delta_{p}^{*}(d_{1}\dots d_{p}f)) \\
&= d(\Psi(f)).  \qedhere
\end{align*}
\end{proof}
We thus have a commutative diagram
\[
\xymatrix{
\ul V \ar[dr] \ar[r]^-{\simeq} & \iHom_{BG}(\l{E_{\dot}G}, \ul V) \ar[d]^{\Psi} \\
& \ul{\Omega}^{*}_{\ad}(G,V)
}
\]
where the complexes on the right-hand side consist of acyclic sheaves. The proposition now follows by taking global sections.
\end{proof}

\appendix
\section*{Appendix: The Poincar\'e lemma}

Let $W$ be a $K$-Banach space with norm $\|\,.\,\|$. For a multiradius $\epsilon=(\epsilon_1,\dots, \epsilon_n) \in \R_+^n$ we denote the space of $\epsilon$-convergent power series in $n$ variables $x=(x_1,\dots, x_n)$ with coefficients in $W$ by $F_\epsilon(W)$:
\[
F_\epsilon(W) := \left\{ \sum_{I\in \N_0^n} a_Ix^I\,|\, a_I\in W, \|a_I\|\epsilon^I \xrightarrow{I\to\infty} 0 \right\}
\]
Equipped with the norm $\|\sum_I a_Ix^I\|_\epsilon := \max_I \|a_I\|\epsilon^I$, this is again a Banach space.

Let $\Phi\colon W \to W'$ be a continuous linear map between Banach spaces. It induces a continuous linear map $F_\epsilon(W) \to F_\epsilon(W')$. Let $B_\epsilon(0) \subset K^n$ be the closed ball of radius $\epsilon$ around $0$. For any $x\in B_\epsilon(0)$ we have the evaluation at $x$,  written $i_x^*\colon F_\epsilon(W) \to W$ and similarly for $W'$. Since $\Phi$ is continuous the diagram
\begin{equation}
\begin{split}\label{diag:PhiEval}
\xymatrix{
F_\epsilon(W) \ar[r]^-\Phi \ar[d]_{i_x^*} & F_\epsilon(W') \ar[d]^{i_x^*} \\
W \ar[r]^-\Phi & W'
}
\end{split}
\end{equation}
commutes.

For $q\geq 0$ we denote by $\Omega^q_\epsilon(W)$ the space of $\epsilon$-convergent $W$-valued $q$-forms in $n$ variables:
\[
\Omega^q_\epsilon(W) := \bigwedge\nolimits_K^q(K^n)^\vee \otimes_K F_\epsilon(W).
\]
Since $\bigwedge\nolimits_K^q(K^n)^\vee$ is a finite dimensional $K$-vector space, this is again a Banach space. The usual differential defines a continuous linear map $d\colon \Omega^q_\epsilon(W) \to \Omega^{q+1}_\epsilon(W)$.

There is natural injection $\Omega_{\epsilon}^{q}(W) \hookrightarrow \Omega^{q}(B_{\epsilon}(0), W)$ into the space of locally analytic $W$-valued $q$-forms. It is compatible with the differential. More generally, if $\delta\in \R^{m}_{+}$ is a second multiradius, we can identify $\delta$-convergent power series with coefficients in $\Omega^{q}_{\epsilon}(W)$ with relative $W$-valued forms:
\begin{equation}\label{eq:ConvergentRelativeForms}
F_{\delta}(\Omega^{q}_{\epsilon}(W)) \hookrightarrow \Omega^{q}\left(B_{\delta}(0) \times B_{\epsilon}(0)/B_{\delta}(0), W\right).
\end{equation}
On the other hand, every relative $q$-form is in the image of \eqref{eq:ConvergentRelativeForms} after shrinking $\delta$ and $\epsilon$ appropriately.

Let $\epsilon'\in\R^n_+$ be a multiradius which is component-wise strictly smaller than $\epsilon$, written $\epsilon' < \epsilon$. 
There is a continuous inclusion $i\colon \Omega^q_\epsilon(W) \hookrightarrow \Omega^q_{\epsilon'}(W)$.
\begin{lemma}[Poincar\'e lemma]\label{lem:Poincare}
Let $\epsilon' <\epsilon$ and $q>0$. Then there exists a bounded linear map 
\[
h\colon \Omega^q_\epsilon(W) \to \Omega^{q-1}_{\epsilon'}(W)
\]
such that $d\circ h + h\circ d=i$.
\end{lemma}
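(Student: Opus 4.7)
The plan is to take the standard Cartan homotopy built from the Euler vector field, thinking of it as an explicit linear combination of coefficients and monomials, and then resolve the $p$-adic obstruction to convergence by exploiting the strict contraction $\epsilon' < \epsilon$. Let $E = \sum_{i=1}^n x_i \partial_{x_i}$ denote the Euler vector field on $K^n$ and $\iota_E$ its contraction. On a monomial form $\omega = a\, x^I dx_{i_1}\wedge\cdots\wedge dx_{i_q}$ with $a\in W$ and $|I| = d$, I would set
\[
h\omega := \frac{1}{d+q}\,\iota_E\omega = \frac{1}{d+q}\sum_{k=1}^q (-1)^{k-1} a\, x^{I+e_{i_k}}\, dx_{i_1}\wedge\cdots\widehat{dx_{i_k}}\cdots\wedge dx_{i_q},
\]
and extend $K$-linearly to the subspace of polynomial forms (i.e.\ $W$-valued polynomial coefficients). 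Since $q > 0$, the denominator $d+q$ is a positive integer for all $d\geq 0$, and no division of $a \in W$ is required beyond scaling by a rational number.

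On polynomial forms, the identity $(dh+hd)\omega = \omega$ is the classical Cartan magic formula: the operator $\mathcal L_E = d\iota_E + \iota_E d$ acts on $x^I dx_{i_1}\wedge\cdots\wedge dx_{i_q}$ as multiplication by the eigenvalue $d+q$, and dividing $\iota_E$ by this eigenvalue produces a homotopy between the identity and zero on the positive-degree part. A one-line check on monomials makes this explicit.

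The main obstacle is to promote $h$ to a bounded operator $\Omega^q_\epsilon(W) \to \Omega^{q-1}_{\epsilon'}(W)$: because $K$ may have positive residue characteristic $p$, the factor $|d+q|^{-1}$ is unbounded as $d\to\infty$, so the naive extension will not converge on $\Omega^q_\epsilon(W)$. Here the hypothesis $\epsilon' < \epsilon$ is crucial. Writing an arbitrary $\omega = \sum_{I,J} a_{I,J} x^I dx_J \in \Omega^q_\epsilon(W)$, the coefficient of $x^{I+e_{i_k}} dx_{J\setminus\{i_k\}}$ in $h\omega$ has norm $|d+q|^{-1}\|a_{I,J}\|$, so
\[
\|h\omega\|_{\epsilon'} \leq \Bigl(\max_i \epsilon'_i\Bigr)\cdot \|\omega\|_\epsilon \cdot \sup_{d\geq 0} \frac{\rho^d}{|d+q|},
\quad \rho := \max_i\frac{\epsilon'_i}{\epsilon_i} < 1.
\]
The point is that $|d+q|^{-1}$ grows at most polynomially in $d$ (since $v_p(d+q) \leq \log_p(d+q)$), while $\rho^d$ decays geometrically, so the supremum is finite. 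The same estimate, applied termwise, shows that the sequence of coefficients of $h\omega$ satisfies the $\epsilon'$-convergence condition: if $\|a_{I,J}\|\epsilon^I \to 0$ then $|d+q|^{-1}\|a_{I,J}\|(\epsilon')^{I+e_{i_k}} \to 0$.

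Since $h$ and the identity $dh+hd=i$ are defined and continuous on $\Omega^q_\epsilon(W)$ (after composition with the inclusion $i\colon \Omega^q_\epsilon(W) \hookrightarrow \Omega^q_{\epsilon'}(W)$), and since polynomial forms are dense in $\Omega^q_\epsilon(W)$, the identity $dh+hd=i$ established on polynomial forms extends by continuity to all of $\Omega^q_\epsilon(W)$, completing the proof. A final remark: the formula manifestly commutes with post-composition of coefficients by a continuous linear map $\Phi\colon W \to W'$, which is what will be needed in Lemma \ref{lem:NaturalityIntegration}.
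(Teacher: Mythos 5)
Your proposal is correct and is essentially the paper's own proof: the operator $h$ you define (contraction with the Euler vector field divided by the eigenvalue $|I|+q$ of $\mathcal{L}_E$) is exactly the paper's formula, and the convergence/boundedness argument via $\epsilon'<\epsilon$ absorbing the at-most-polynomial growth of $|{|I|+q}|^{-1}$ is the same estimate the paper makes with $C=\max_i(\epsilon_i/\epsilon_i')>1$. The only cosmetic difference is that you invoke Cartan's magic formula where the paper writes out the monomial computation of $dh+hd$ explicitly; both reduce to monomials by continuity/density in the same way.
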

\begin{proof}
We have
\[
\Omega^q_\epsilon(W) = \bigoplus_{1\leq k_{1}< \dots < k_{q}\leq n} F_\epsilon(W) dx_{k_1}\dots dx_{k_q}.
\]
Set $C:= \max_i ({\epsilon_i}/{\epsilon_i'})$. By assumption we have $C>1$. Hence, for integers $N \gg 0$, we have
$|1/(N+q)| \leq C^N$.
We define 
\[
h(x^I dx_{k_1}\dots dx_{k_q}) := \frac{1}{|I|+q}\sum_{\alpha=1}^q
(-1)^{\alpha-1}x^{I+e_{k_\alpha}} dx_{k_1}\dots \widehat{dx_{k_\alpha}} \dots dx_{k_q}.
\]
and 
\[
h\left(\sum a_{I}x^Idx_{k_1}\dots dx_{k_q}\right) := \sum a_{I} h(x^I dx_{k_1}\dots dx_{k_q}).
\]
Since
\begin{equation*}
\left\|\frac{a_{I}}{|I|+q}\right\| \epsilon'^I
\leq \|a_{I}\|C^{|I|} \epsilon'^I
\leq \|a_{I}\|\epsilon^I \text{ for }|I| \gg 0
\end{equation*}
it follows that the power series  $\sum_{I}\frac{a_{I}}{|I|+q}x^{I+e_{k_{\alpha}}}$ is
$\epsilon'$-convergent, whence that $h$ is well defined, and also that $h$ is a bounded linear operator.

By continuity, it is now enough to check the equality $dh+hd=i$ on  monomials $x^I
dx_{k_1}\dots dx_{k_q}$. Relabeling the coordinates, we may moreover assume that $(k_1, \dots, k_q)
= (1, \dots, q)$.
We have
\begin{multline*}
dh(x^I dx_{1}\dots dx_{q}) = d\left(\frac{1}{|I|+q}\sum_{\alpha=1}^q
(-1)^{\alpha-1}x^{I+e_\alpha} dx_{1}\dots \widehat{dx_{\alpha}} \dots dx_{q}\right) \\
= \left(\frac{1}{|I|+q} \sum_{\alpha=1}^{q} (i_{\alpha}+1)x^{I}dx_{1}\dots dx_{q}\right) + \\
	\frac{1}{|I|+q}\sum_{\alpha=1}^{q}\sum_{\beta=q+1}^{n}(-1)^{\alpha-1}(-1)^{q-1}i_{\beta}x^{I+e_{\alpha}-e_{\beta}}
		dx_{1}\dots \widehat{dx_{\alpha}} \dots dx_{q} dx_{\beta}\\
= \frac{(\sum_{\alpha=1}^{q}i_{\alpha})+q}{|I|+q} x^I dx_{1}\dots dx_{q} +  \\
	\frac{1}{|I|+q}\sum_{\alpha=1}^{q}\sum_{\beta=q+1}^{n}(-1)^{\alpha+q}i_{\beta}x^{I+e_{\alpha}-e_{\beta}}
		dx_{1}\dots \widehat{dx_{\alpha}} \dots dx_{q} dx_{\beta}	
\end{multline*}
and 
\begin{multline*}
hd(x^{I}dx_{1}\dots dx_{q}) = h\left((-1)^{q}\sum_{\beta=q+1}^{n}  i_{\beta}x^{I-e_{\beta}}dx_{1}\dots dx_{q}dx_{\beta}\right) \\
= \frac{(-1)^{q}}{|I|+q}\sum_{\alpha=1}^{q} \sum_{\beta=q+1}^{n}  (-1)^{\alpha-1} i_{\beta}x^{I+e_{\alpha}-e_{\beta}} dx_{1}\dots \widehat{dx_{\alpha}} \dots dx_{q} dx_{\beta}	 + \\
	\frac{(-1)^{q}}{|I|+q} \sum_{\beta=q+1}^{n}  (-1)^{q}i_{\beta}x^{I}dx_{1}\dots dx_{q} \\
= \frac{1}{|I|+q}\sum_{\alpha=1}^{q}\sum_{\beta=q+1}^{n}(-1)^{\alpha+q-1}i_{\beta}x^{I+e_{\alpha}-e_{\beta}}
		dx_{1}\dots \widehat{dx_{\alpha}} \dots dx_{q} dx_{\beta}	+ \\
		\frac{(\sum_{\beta=q+1}^{n}i_{\beta})}{|I|+q} x^I dx_{1}\dots dx_{q} \\
\end{multline*}
Thus, $(dh+hd)(x^{I}dx_{1}\dots dx_{k}) = x^{I}dx_{1}\dots dx_{k}$. This finishes the proof of the
lemma.
\end{proof}

\begin{lemma}\label{lem:NaturalityIntegration}
Let $\Phi\colon W \to W'$ be a bounded linear map between Banach spaces. It induces a map $\Omega^q_\epsilon(W) \to \Omega^q_\epsilon(W')$, denoted by the same symbol. For $q>0$ and $\epsilon' < \epsilon$, the diagram
\[
\xymatrix{
\Omega^q_\epsilon(W) \ar[r]^-h \ar[d]_\Phi & \Omega^{q-1}_{\epsilon'}(W) \ar[d]^\Phi \\
\Omega^q_\epsilon(W') \ar[r]^-h  & \Omega^{q-1}_{\epsilon'}(W')
}
\]
commutes.
\end{lemma}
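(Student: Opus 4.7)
The plan is to reduce the verification to monomials and then observe that $\Phi$ passes through the explicit formula defining $h$ in a manifest way. Both the operator $h$, as constructed in Lemma \ref{lem:Poincare}, and the map induced by $\Phi$ are bounded $K$-linear maps, so the two composites $\Phi\circ h$ and $h\circ \Phi$ are bounded $K$-linear maps $\Omega^q_\epsilon(W)\to \Omega^{q-1}_{\epsilon'}(W')$. It therefore suffices to establish the equality on a dense subspace, and the natural choice is the $K$-linear span of monomials $ax^I\, dx_{k_1}\dots dx_{k_q}$ with $a\in W$, $I\in\N_0^n$, and $1\leq k_1<\dots<k_q\leq n$. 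I would first record that the map induced by $\Phi$ on $\Omega^q_\epsilon(W) = \bigwedge^q_K(K^n)^\vee\otimes_K F_\epsilon(W)$ is nothing but $\id\otimes \Phi$, i.e.\ it acts coefficient-wise, sending $\sum_I a_I x^I\, dx_{k_1}\dots dx_{k_q}$ to $\sum_I \Phi(a_I)x^I\, dx_{k_1}\dots dx_{k_q}$.

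On such a monomial, applying $\Phi$ first and then $h$ yields, directly from the defining formula for $h$,
\[
\frac{1}{|I|+q}\sum_{\alpha=1}^{q}(-1)^{\alpha-1}\Phi(a)x^{I+e_{k_\alpha}}\, dx_{k_1}\dots \widehat{dx_{k_\alpha}}\dots dx_{k_q}.
\]
Applying $h$ first produces the analogous expression with $a$ in place of $\Phi(a)$, after which $\Phi$ pulls through the scalar $1/(|I|+q)\in K$, the alternating sign, the monomial $x^{I+e_{k_\alpha}}$, and the wedge factor to yield exactly the same expression. Hence the diagram commutes on monomials, and by continuity it commutes on all of $\Omega^q_\epsilon(W)$.

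The only point worth double-checking is that the dense-subspace argument is legitimate, i.e.\ that both composites really are continuous into $\Omega^{q-1}_{\epsilon'}(W')$. For $h\circ\Phi$ this is clear because $\Phi\colon F_\epsilon(W)\to F_\epsilon(W')$ is bounded with operator norm bounded by $\|\Phi\|$ (the coefficient estimate $\|\Phi(a_I)\|\leq \|\Phi\|\cdot\|a_I\|$ gives $\|\Phi(f)\|_\epsilon\leq \|\Phi\|\cdot\|f\|_\epsilon$), and $h$ is bounded by Lemma \ref{lem:Poincare}; for $\Phi\circ h$ the argument is symmetric. With both composites known to be bounded $K$-linear, agreement on the dense monomial subspace forces agreement everywhere, completing the proof. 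The substantive content is thus entirely absorbed by the purely formal fact that the rational scalars and reindexing operations defining $h$ live in $K$ and commute with the $K$-linear coefficient map $\Phi$; no analytic obstruction arises.
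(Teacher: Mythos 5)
Your proof is correct and follows the same route the paper intends: the paper's proof is just ``this follows directly from the definitions,'' and your write-up spells out exactly why --- $h$ acts coefficient-wise through scalars in $K$ and reindexing, while $\Phi$ is applied coefficient-wise, so they commute on monomials and hence everywhere by boundedness and density (indeed, since $h$ is \emph{defined} termwise on a series $\sum_I a_I x^I\,dx_{k_1}\dots dx_{k_q}$, one can even verify the identity directly on arbitrary elements without invoking density).
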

\begin{proof}
This follows directly from the definitions.
\end{proof}

\bibliographystyle{amsalpha}
\bibliography{Lazard}

\providecommand{\bysame}{\leavevmode\hbox to3em{\hrulefill}\thinspace}
\providecommand{\MR}{\relax\ifhmode\unskip\space\fi MR }
\providecommand{\MRhref}[2]{%
  \href{http://www.ams.org/mathscinet-getitem?mr=#1}{#2}
}
\providecommand{\href}[2]{#2}
\begin{thebibliography}{HKN11}

\bibitem[FdL99]{Feaux}
Christian~Tobias F{\'e}aux~de Lacroix, \emph{Einige {R}esultate \"uber die
  topologischen {D}arstellungen {$p$}-adischer {L}iegruppen auf unendlich
  dimensionalen {V}ektorr\"aumen \"uber einem {$p$}-adischen {K}\"orper},
  Schriftenreihe Math. Inst. Univ. M\"unster 3. Ser., vol.~23, Univ. M\"unster,
  1999, pp.~x+111. \MR{1691735 (2000k:22021)}

\bibitem[Fla08]{Flach}
Matthias Flach, \emph{Cohomology of topological groups with applications to the
  {W}eil group}, Compos. Math. \textbf{144} (2008), no.~3, 633--656.
  \MR{2422342 (2009f:14033)}

\bibitem[HK11]{Huber-Kings}
Annette Huber and Guido Kings, \emph{A {$p$}-adic analogue of the {B}orel
  regulator and the {B}loch-{K}ato exponential map}, J. Inst. Math. Jussieu
  \textbf{10} (2011), no.~1, 149--190. \MR{2749574 (2012a:19010)}

\bibitem[HKN11]{HKN}
Annette Huber, Guido Kings, and Niko Naumann, \emph{Some complements to the
  {L}azard isomorphism}, Compos. Math. \textbf{147} (2011), no.~1, 235--262.
  \MR{2771131 (2012d:22016)}

\bibitem[Koh11]{Kohlhaase}
Jan Kohlhaase, \emph{The cohomology of locally analytic representations}, J.
  Reine Angew. Math. \textbf{651} (2011), 187--240. \MR{2774315}

\bibitem[Laz65]{Lazard}
Michel Lazard, \emph{Groupes analytiques {$p$}-adiques}, Inst. Hautes \'Etudes
  Sci. Publ. Math. (1965), no.~26, 389--603. \MR{0209286 (35 \#188)}

\bibitem[Lec12]{Lechner}
Sabine Lechner, \emph{{A comparison of locally analytic group cohomology and
  Lie algebra cohomology for p-adic Lie groups}},
  \href{http://arxiv.org/abs/1201.4550}{arXiv:1201.4550}, 2012.

\bibitem[Sch02]{Schneider-NFA}
Peter Schneider, \emph{Nonarchimedean functional analysis}, Springer Monographs
  in Mathematics, Springer-Verlag, Berlin, 2002. \MR{1869547 (2003a:46106)}

\bibitem[Sch11]{Schneider}
\bysame, \emph{{$p$}-adic {L}ie groups}, Grundlehren der Mathematischen
  Wissenschaften [Fundamental Principles of Mathematical Sciences], vol. 344,
  Springer, Heidelberg, 2011. \MR{2810332 (2012h:22010)}

\bibitem[SGA72]{SGA41}
\emph{Th\'eorie des topos et cohomologie \'etale des sch\'emas. {T}ome 1:
  {T}h\'eorie des topos}, Lecture Notes in Mathematics, Vol. 269,
  Springer-Verlag, Berlin-New York, 1972, S{\'e}minaire de G{\'e}om{\'e}trie
  Alg{\'e}brique du Bois-Marie 1963--1964 (SGA 4), Dirig{\'e} par M. Artin, A.
  Grothendieck, et J. L. Verdier. Avec la collaboration de N. Bourbaki, P.
  Deligne et B. Saint-Donat. \MR{0354652 (50 \#7130)}

\bibitem[ST02]{ST}
Peter Schneider and Jeremy Teitelbaum, \emph{Locally analytic distributions and
  {$p$}-adic representation theory, with applications to {${\rm GL}_2$}}, J.
  Amer. Math. Soc. \textbf{15} (2002), no.~2, 443--468 (electronic).
  \MR{1887640 (2003b:11132)}

\end{thebibliography}

\end{document}